\DeclareRobustCommand\widecheck[1]{{\mathpalette\@widecheck{#1}}}
\def\@widecheck#1#2{%
    \setbox\z@\hbox{\m@th$#1#2$}%
    \setbox\tw@\hbox{\m@th$#1%
       \widehat{%
          \vrule\@width\z@\@height\ht\z@
          \vrule\@height\z@\@width\wd\z@}$}%
    \dp\tw@-\ht\z@
    \@tempdima\ht\z@ \advance\@tempdima2\ht\tw@ \divide\@tempdima\thr@@
    \setbox\tw@\hbox{%
       \raise\@tempdima\hbox{\scalebox{1}[-1]{\lower\@tempdima\box
\tw@}}}%
    {\ooalign{\box\tw@ \cr \box\z@}}}
\newcommand\nthalias[1]{\AddToHook{env/#1/begin}{\crefalias{lemma}{#1}}}
\crefname{section}{Section}{Sections}
\crefname{subsection}{\S}{\S\S}
\crefname{subsubsection}{\S}{\S\S}
\theoremstyle{plain}
\newtheorem{lemma}{Lemma}[section]
\newtheorem{proposition}[lemma]{Proposition}
\newtheorem{corollary}[lemma]{Corollary}
\newtheorem{theorem}[lemma]{Theorem}
\theoremstyle{plain}
\theoremstyle{plain}
\newtheorem{example}[lemma]{Example}
\newtheorem{remarks}[lemma]{Remarks}
\crefname{definition}{definition}{definitions}
\crefname{ex}{example}{examples}
\crefname{exs}{example}{examples}
\crefname{remark}{remark}{remarks}
\crefname{remarks}{remark}{remarks}
\crefname{convention}{convention}{conventions}
\crefname{notation}{notation}{notations}
\crefname{table}{table}{tables}
\crefname{lemma}{lemma}{lemmas}
\crefname{proposition}{proposition}{propositions}
\crefname{propositionN}{proposition}{propositions}
\crefname{corollary}{corollary}{corollaries}
\crefname{corollaryN}{corollary}{corollaries}
\crefname{theorem}{theorem}{theorems}
\crefname{theoremN}{theorem}{theorems}
\crefname{enumi}{}{}
\crefname{assumption}{assumption}{Assumptions}
\crefname{construction}{construction}{Constructions}
\crefname{sketch}{sketch}{Sketches}
\crefname{recollection}{recollection}{Recollectiones}
\crefname{question}{question}{Questions}
\crefname{equation}{}{}
\numberwithin{equation}{section}
\theoremstyle{nonumberplain}
\newtheorem{proof}{Proof}
\newcommand\pf[1]{\newtheorem{#1}{Proof of \Cref{#1}}}
\newcommand\bG{{\mathbb G}}
\newcommand\bK{{\mathbb K}}
\newcommand\bQ{{\mathbb Q}}
\newcommand\bR{{\mathbb R}}
\newcommand\bZ{{\mathbb Z}}
\newcommand\cC{{\mathcal C}}
\newcommand\cN{{\mathcal N}}
\newcommand\cP{{\mathcal P}}
\newcommand\cU{{\mathcal U}}
\newcommand\fc{{\mathfrak c}}
\newcommand{\wtimes}{\mathbin{\widetilde{\times}}}
\DeclareMathOperator{\id}{id}
\newcommand{\cat}[1]{\textsc{#1}}
\newcommand{\qedhere}{\mbox{}\hfill\ensuremath{\blacksquare}}
\newcommand{\xrightarrowdbl}[2][]{%
  \xrightarrow[#1]{#2}\mathrel{\mkern-14mu}\rightarrow
}
\title{Discontinuous actions on cones, joins, and $n$-universal bundles}
\author{Alexandru Chirvasitu}
\begin{document}

\date{}

\newcommand{\Addresses}{{
  \bigskip
  \footnotesize

  \textsc{Department of Mathematics, University at Buffalo}
  \par\nopagebreak
  \textsc{Buffalo, NY 14260-2900, USA}  
  \par\nopagebreak
  \textit{E-mail address}: \texttt{achirvas@buffalo.edu}

}}

\maketitle

\begin{abstract}
  We prove that locally countably-compact Hausdorff topological groups $\mathbb{G}$ act continuously on their iterated joins $E_n\mathbb{G}:=\mathbb{G}^{*(n+1)}$ (the total spaces of the Milnor-model $n$-universal $\mathbb{G}$-bundles) as well as the colimit-topologized unions $E\mathbb{G}=\varinjlim_n E_n\mathbb{G}$, and the converse holds under the assumption that $\mathbb{G}$ is first-countable. In the latter case other mutually equivalent conditions provide characterizations of local countable compactness: the fact that $\mathbb{G}$ acts continuously on its first self-join $E_1\mathbb{G}$, or on its cone $\mathcal{C}\mathbb{G}$, or the coincidence of the product and quotient topologies on $\mathbb{G}\times \mathcal{C}X$ for all spaces $X$ or, equivalently, for the discrete countably-infinite $X:=\aleph_0$. These can all be regarded as weakened versions of $\mathbb{G}$'s exponentiability, all to the effect that $\mathbb{G}\times -$ preserves certain colimit shapes in the category of topological spaces; the results thus extend the equivalence (under the separation assumption) between local compactness and exponentiability. 
\end{abstract}

\noindent \emph{Key words:
  colimit;
  cone;
  countably compact;
  exponentiable space;
  principal bundle;
  quotient topology;
  ultrapower;
  universal bundle
}

\vspace{.5cm}

\noindent{MSC 2020: 22F05; 54B15; 54D20; 18A30; 06F20; 54D15; 06F30; 03C20
  
}


\section*{Introduction}

For a topological group $\bG$, there will be frequent references to Milnor's \emph{universal principal $\bG$-bundle} \cite[\S 3]{miln_univ-2}
\begin{equation*}
  \left(E\bG:=\bigcup_n E_n \bG\right)
  \xrightarrowdbl{\quad}
  \left(B\bG:=\bigcup_n B_n \bG\right)
  ,\quad
  E_n\bG:=\bG^{*(n+1)}
  ,\quad
  B_n\bG:=E_n\bG/\bG
\end{equation*}
where
\begingroup
\allowdisplaybreaks
\begin{align*}
  X*Y
  &:=
    X\times Y\times \left(I:=[0,1]\right)
    \bigg/
    \begin{aligned}
      (x,y,0)&\sim (x,y',0)\\
      (x,y,1)&\sim (x',y,1)
    \end{aligned}\\
  \cC X&:=X\times I\big/X\times \{0\}
\end{align*}
\endgroup are the \emph{join} of $X$ and $Y$ and the \emph{cone} on $X$ respectively (\cite[pp.9-10]{hatch_at}, \cite[\S 2]{miln_univ-2}) and the (free) $\bG$-actions on $E_n\bG$, $n\le \infty$ are the obvious translation ones. 

The importance of $E\bG$ (and analogous constructions such as those employed in \cite[\S 16.5]{may_at_1999} or \cite[\S 7]{may_cls}) lies in its \emph{universality}: locally trivial \emph{numerable} \cite[\S 14.3]{td_alg-top} principal $\bG$-bundles over $X$ are classified \cite[Theorem 14.4.1]{td_alg-top} as pullbacks along maps $X\to B\bG$ uniquely defined up to homotopy. Cones, joins and $E\bG$ each carry at least two topologies of interest in that context. Focusing on the more elaborate construct $E\bG$ (with the notation extending to cones and joins as well), there is
\begin{enumerate}[(a),wide]
\item a \emph{stronger} topology $\tau_{\varinjlim}$, in the usual sense \cite[Definition 3.1]{wil_top} of having more open sets, defined by equipping every quotient in sight (so all joins) with the respective \emph{quotient topology} \cite[Definition 9.1]{wil_top} and then regarding $E\bG=\bigcup_n E_n\bG$ as a \emph{colimit} in the category of topological spaces;

\item a \emph{weaker} topology $\tau_w$ (meaning\footnote{It is somewhat unfortunate that the terms `weak' and `strong', in the present context of comparing topologies, appear to have had their meanings precisely interchanged: \cite[\S\S 2 and 5]{miln_univ-2}, for instance, employ them in exactly opposite fashion.} \cite[Definition.3.1]{wil_top} \emph{fewer} open sets) obtained \cite[\S 14.4, Problem 10]{td_alg-top} by embedding
  \begin{equation*}
    E\bG
    \cong
    \left\{(t_n g_n)_n\ :\ t_ng_n\in \cC\bG\ \wedge \sum_n t_n=1\right\}
    \subseteq
    \left(\cC\bG\right)^{\bZ_{\ge 0}},
  \end{equation*}
  and equipping each cone $\cC \bG$ with its \emph{coordinate topology}: weakest with all
  \begin{equation*}
    \cC X\ni tx
    \xmapsto{\quad\tau\quad}
    t\in
    I
    ,\quad
    \tau^{-1}\left((0,1]\right)
    \ni tx
    \xmapsto{\quad}
    x\in X
  \end{equation*}
  continuous.
\end{enumerate}

Equipped with the weaker topology $\tau_w$, $E\bG$ is indeed a contractible $\bG$-space (the latter phrase meaning, here, ``topological space equipped with a continuous $\bG$-action''), and textbook accounts tend to proceed on these lines: \cite[\S 14.4.3]{td_alg-top} or \cite[\S 4.11]{hus_fib}, say. In settings where $\tau_{\varinjlim}$ is preferred, the heart of the matter seems to be the continuity of the resulting $\bG$-action. While counterexamples are easily produced to illustrate its failure ($(\bQ,+)$, for instance, already acts discontinuously on its \emph{first} self-join $E_1\bQ=\bQ*\bQ$ \cite[Proposition 2.2]{2511.13511v1}), various devices can mitigate such pathologies.
\begin{itemize}[wide]
\item In first instance, if the topological group $\bG$ is well-behaved enough, the continuity of
  \begin{equation}\label{eq:act.on.colim}
    \bG \times \left(E\bG,\varinjlim\right) 
    \xrightarrow{\quad}
    \left(E\bG,\varinjlim\right)
  \end{equation}
  is automatic. Specifically, it suffices that $\bG$ be locally compact: according to \cite[Proposition 7.1.5]{brcx_hndbk-2} it is in that case \emph{exponentiable} in the sense \cite[Definition 7.1.3]{brcx_hndbk-2} that $-\times \bG$ is a left adjoint on the category of topological spaces, so is \emph{cocontinuous} \cite[dualized Proposition 3.2.2]{brcx_hndbk-1} (preserves colimits). The domain of \Cref{eq:act.on.colim} thus itself carries a colimit topology, hence the continuity of the action by colimit functoriality. 

  This gadgetry is operative in the setting of \cite[\S 2]{atiyahsegal}, say, where the colimit topology is employed and \cite[footnote 1]{atiyahsegal} points out why all is still well because groups are assumed, there, compact Lie. 
  
\item Alternatively, some sources take the somewhat more sophisticated route of substituting for the usual Cartesian-product-equipped category $\left(\cat{Top},\times\right)$ of topological spaces that of (Hausdorff) \emph{compactly-generated (or $k$-)spaces} (\cite[Definition 43.8]{wil_top}, \cite[Definition 7.2.5]{brcx_hndbk-2}), equipped with \emph{its} categorical product $\times_k$.

  The \emph{Cartesian closure} \cite[Corollary 7.2.6]{brcx_hndbk-2} of the latter category $\left(\cat{Top}_k,\times_k\right)$ then ensures that all endofunctors $-\times_k X$ are left adjoints, so the previous item's argument applies universally and \Cref{eq:act.on.colim} is continuous if $\bG$ is a group object internal to $\left(\cat{Top}_k,\times_k\right)$ and $\times$ is reinterpreted as $\times_k$. This is the machinery at work in \cite{MR353298} (per \cite[\S 0, very last sentence]{MR353298}) or \cite[\S 5]{may_at_1999}, for instance. In the latter case this is on first sight somewhat obscured by the presentation, but the geometric-realization constructions employed in \cite[\S 5]{may_at_1999} applies the material developed in \cite[\S 4]{may_at_1999}, which in turn takes for its base category a slightly broader analogue of $\left(\cat{Top}_k,\times_k\right)$ (in the sense that the Hausdorff condition is somewhat relaxed; see the conventions spelled out in \cite[\S 5.2]{may_at_1999}). 
\end{itemize}

\cite[Definition 7.2.7]{hjjm_bdle} seems to be an exception to this dichotomy in approaches to the issue of continuity in \Cref{eq:act.on.colim}: while the quotient topology is adopted, neither constraints on $\bG$ nor ``non-standard'' ambient categories of topological spaces appear to be in place in that discussion. 

These subtleties in how careful one must be, and under what conditions, in order to ensure the continuity of \Cref{eq:act.on.colim} (or rather, below, of its truncated versions $\bG \times E_n\bG\xrightarrow{\triangleright_n} E_n\bG$) are what motivate and form the focus of the paper. Specifically, the main result, giving in particular a full characterization of those first-countable groups for which all $\triangleright_n$ are continuous, is as follows.

\begin{theorem}\label{th:lc.a0.cpct}
  Consider the following conditions on a Hausdorff topological group.
  \begin{enumerate}[(a),wide]

  \item\label{item:th:lc.a0.cpct:lcc} $\bG$ is locally countably-compact. 

  \item\label{item:th:lc.a0.cpct:eg} The $\bG$-action on the colimit-topologized full Milnor total space $E\bG=\varinjlim_n E_n\bG$ is continuous.
    
  \item\label{item:th:lc.a0.cpct:all.l} For any (possibly non-$T_2$) topological space $X$ the left-hand translation $\bG$-action on the quotient-topologized
    \begin{equation*}
      \left(
        \text{quotient-topologized }
        \bG\times X\times I
        \xrightarrowdbl{\ }
        \bG\times \cC X
      \right)
      =:
      \bG\wtimes \cC X
    \end{equation*}
    is continuous.
    
  \item\label{item:th:lc.a0.cpct:all.same.top} For any (possibly non-$T_2$) topological space $X$ the identity
    \begin{equation*}
      \bG\wtimes \cC X
      \xrightarrow{\ \id\ }
      \left(\bG\times \cC X,\ \text{product topology}\right)
    \end{equation*}
    is a homeomorphism.  

  \item\label{item:th:lc.a0.cpct:en} For all (some) $n\in \bZ_{\ge 1}$ the $\bG$-action on the colimit-topologized truncated Milnor total space $E_n\bG:=\bG^{*(n+1)}$ is continuous.
    
  \item\label{item:th:lc.a0.cpct:e1} The $\bG$-action on the truncated Milnor total space $E_1\bG:=\bG*\bG$ with its colimit topology is continuous.

  \item\label{item:th:lc.a0.cpct:gcg.diag} The diagonal $\bG$-action on the quotient-topologized Cartesian product $\bG\wtimes \cC\bG$ is continuous. 

  \item\label{item:th:lc.a0.cpct:gcg.l} The left-hand translation $\bG$-action on $\bG\wtimes \cC\bG$ is continuous.

  \item\label{item:th:lc.a0.cpct:same.top} $\bG\wtimes \cC\bG \xrightarrow{\id} \bG\times \cC\bG$ is a homeomorphism.
    
  \item\label{item:th:lc.a0.cpct:gca0.l} The left-hand translation $\bG$-action on $\bG\wtimes \cC\aleph_0$ is continuous, $\aleph_0$ denoting a countably-infinite discrete space. 

  \item\label{item:th:lc.a0.cpct:a0.same.top} $\bG\wtimes \cC\aleph_0 \xrightarrow{\id} \bG\times \cC\aleph_0$ is a homeomorphism.
    
  \item\label{item:th:lc.a0.cpct:cn} The $\bG$-action on the cone $\cC \bG$ with its quotient topology is continuous.  
  \end{enumerate}
  The implications
  \begin{equation*}    
    \begin{tikzpicture}[>=stealth,auto,baseline=(current  bounding  box.center)]
      \draw[anchor=base] (-3.6,0) node (ll) {\Cref{item:th:lc.a0.cpct:lcc}};
      \draw[anchor=base] (-1.8,-.5) node (eg) {\Cref{item:th:lc.a0.cpct:eg}};
      \draw[anchor=base] (-2.4,.5) node (l) {\Cref{item:th:lc.a0.cpct:all.l}};
      \draw[anchor=base] (-1.2,.5) node (1) {\Cref{item:th:lc.a0.cpct:all.same.top}};
      \draw[anchor=base] (0,0) node (15) {\Cref{item:th:lc.a0.cpct:en}};
      \draw[anchor=base] (1.2,0) node (2) {\Cref{item:th:lc.a0.cpct:e1}};
      \draw[anchor=base] (2.4,0) node (3) {\Cref{item:th:lc.a0.cpct:gcg.diag}};
      \draw[anchor=base] (3.6,0) node (4) {\Cref{item:th:lc.a0.cpct:gcg.l}};
      \draw[anchor=base] (4.8,0) node (5) {\Cref{item:th:lc.a0.cpct:same.top}};
      \draw[anchor=base] (6,.5) node (6) {\Cref{item:th:lc.a0.cpct:gca0.l}};
      \draw[anchor=base] (7.2,.5) node (7) {\Cref{item:th:lc.a0.cpct:a0.same.top}};
      \draw[anchor=base] (6,-.5) node (8) {\Cref{item:th:lc.a0.cpct:cn}};

      \draw[-implies,double equal sign distance] (ll) to[bend right=6] node[pos=.5,auto] {$\scriptstyle $} (eg);
      \draw[-implies,double equal sign distance] (eg) to[bend right=6] node[pos=.5,auto] {$\scriptstyle $} (15);      
      \draw[-implies,double equal sign distance] (ll) to[bend left=16] node[pos=.5,auto] {$\scriptstyle $} (l);
      \draw[implies-implies,double equal sign distance] (l) to[bend left=16] node[pos=.5,auto] {$\scriptstyle $} (1);
      \draw[-implies,double equal sign distance] (1) to[bend left=16] node[pos=.5,auto] {$\scriptstyle $} (15);
      \draw[-implies,double equal sign distance] (15) to[bend left=0] node[pos=.5,auto] {$\scriptstyle $} (2);
      \draw[implies-implies,double equal sign distance] (2) to[bend left=0] node[pos=.5,auto] {$\scriptstyle $} (3);
      \draw[implies-implies,double equal sign distance] (3) to[bend left=0] node[pos=.5,auto] {$\scriptstyle $} (4);
      \draw[implies-implies,double equal sign distance] (4) to[bend left=0] node[pos=.5,auto] {$\scriptstyle $} (5);
      \draw[-implies,double equal sign distance] (5) to[bend left=6] node[pos=.5,auto] {$\scriptstyle $} (6);
      \draw[implies-implies,double equal sign distance] (6) to[bend left=6] node[pos=.5,auto] {$\scriptstyle $} (7);
      \draw[-implies,double equal sign distance] (5) to[bend right=6] node[pos=.5,auto] {$\scriptstyle $} (8);
    \end{tikzpicture}
  \end{equation*}
  hold, and for first-countable $\bG$ all conditions are mutually equivalent. 
\end{theorem}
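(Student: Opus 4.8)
The plan is to treat the three ``coincidence of topologies'' conditions \Cref{item:th:lc.a0.cpct:all.same.top}, \Cref{item:th:lc.a0.cpct:same.top} and \Cref{item:th:lc.a0.cpct:a0.same.top} as the conceptual hubs: each asserts precisely that $\id_\bG\times q_X$ remains a quotient map, where $q_X\colon X\times I\twoheadrightarrow\cC X$ is the cone collapse, respectively for all $X$, for $X=\bG$, and for the discrete $X=\aleph_0$. I would first dispatch the equivalences that need no hypothesis on $\bG$. One direction is immediate: once $\id_\bG\times q_X$ is known to be a quotient map, every translation, diagonal, and cone action in sight is continuous, because each lifts to an obviously continuous self-map of $\bG\times X\times I$ and descends through the quotient; this yields \Cref{item:th:lc.a0.cpct:all.same.top}$\Rightarrow$\Cref{item:th:lc.a0.cpct:all.l}, \Cref{item:th:lc.a0.cpct:same.top}$\Rightarrow$\Cref{item:th:lc.a0.cpct:gcg.diag},\Cref{item:th:lc.a0.cpct:gcg.l}, \Cref{item:th:lc.a0.cpct:a0.same.top}$\Rightarrow$\Cref{item:th:lc.a0.cpct:gca0.l}, and \Cref{item:th:lc.a0.cpct:same.top}$\Rightarrow$\Cref{item:th:lc.a0.cpct:cn}. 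For the converse direction I would exploit that each \emph{individual} group translation is already a self-homeomorphism of the quotient-topologized space (being induced by a collapse-compatible homeomorphism of $\bG\times X\times I$), so the real content of an action-continuity hypothesis is \emph{joint} continuity; a shearing substitution $(h,(g,c))\mapsto(h,(hg,c))$ then converts joint continuity into the coincidence of the product and quotient topologies. The diagonal-versus-translation equivalence \Cref{item:th:lc.a0.cpct:gcg.diag}$\Leftrightarrow$\Cref{item:th:lc.a0.cpct:gcg.l} comes from the same shear $(g,c)\mapsto(g,g^{-1}c)$, which intertwines the two actions; the join equivalence \Cref{item:th:lc.a0.cpct:en}$\Leftrightarrow$\Cref{item:th:lc.a0.cpct:e1}$\Leftrightarrow$\Cref{item:th:lc.a0.cpct:gcg.diag} comes from decomposing $E_1\bG=\bG*\bG$ along $t\le\tfrac12$ and $t\ge\tfrac12$ into two closed halves, each a copy of $\bG\times\cC\bG$ carrying exactly the diagonal cone action, and from the fact that $E_1\bG\hookrightarrow E_n\bG$ is a closed equivariant embedding into a space assembled from finitely many such cone pieces. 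Finally \Cref{item:th:lc.a0.cpct:same.top}$\Rightarrow$\Cref{item:th:lc.a0.cpct:a0.same.top} follows by a case split: if $\bG$ is countably compact it is already locally countably compact, and otherwise it contains a countably infinite closed discrete subset $\aleph_0\hookrightarrow\bG$, along which $\cC\aleph_0\hookrightarrow\cC\bG$ is a closed saturated embedding, so that the quotient map $\id_\bG\times q_\bG$ restricts to the quotient map $\id_\bG\times q_{\aleph_0}$.

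For the forward implication \Cref{item:th:lc.a0.cpct:lcc}$\Rightarrow$\Cref{item:th:lc.a0.cpct:all.same.top} (hence $\Rightarrow$\Cref{item:th:lc.a0.cpct:all.l}) I would argue directly that $\id_\bG\times q_X$ is a quotient map. Let $W\subseteq\bG\times\cC X$ have open saturated preimage $\wt W\subseteq\bG\times X\times I$; only apex points $(g_0,\ast)\in W$ require attention. The crucial first observation is that saturation of $\wt W$ makes the set $D=\{g:\{g\}\times X\times\{0\}\subseteq\wt W\}$ satisfy $D\times X=\wt W\cap(\bG\times X\times\{0\})$, which is open in $\bG\times X$; hence $D$ is an \emph{open} neighbourhood of $g_0$ carrying the whole apex fibre into $\wt W$ uniformly, bypassing what would otherwise be an illegitimate infinite intersection. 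Using that Hausdorff topological groups are regular, shrink $D$ to a closed countably compact neighbourhood $K$ of $g_0$. For each $x$ the sheet-height $f_x(g)=\sup\{s:\{g\}\times\{x\}\times[0,s)\subseteq\wt W\}$ is positive and lower semicontinuous on $K$, and here is the point at which countable compactness is exactly the right hypothesis: a lower semicontinuous function attains its infimum on a countably compact space (the sets $\{f_x\le \inf+\tfrac1n\}$ form a decreasing sequence of nonempty closed sets with the finite-intersection property). Thus $m_x:=\min_K f_x>0$, the assignment $x\mapsto m_x$ is lower semicontinuous, and $O:=\{(x,t):t<m_x\}$ is a genuine cone-neighbourhood with $\mathrm{int}(K)\times O\subseteq\wt W$, producing the required product neighbourhood of $(g_0,\ast)$ inside $W$.

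The remaining forward branch \Cref{item:th:lc.a0.cpct:lcc}$\Rightarrow$\Cref{item:th:lc.a0.cpct:eg}$\Rightarrow$\Cref{item:th:lc.a0.cpct:en} I would handle by a companion ``countable colimit interchange'' step. Local countable compactness ensures that $\bG\times-$ commutes with the sequential colimit $E\bG=\varinjlim_n E_n\bG$ along the closed embeddings $E_n\bG\hookrightarrow E_{n+1}\bG$, by the same countably-compact-neighbourhood bookkeeping carried up the countable tower; since each truncated action $\triangleright_n$ is already continuous by the preceding paragraph (via \Cref{item:th:lc.a0.cpct:all.same.top}$\Rightarrow$\Cref{item:th:lc.a0.cpct:en}), the colimit action is continuous by functoriality, giving \Cref{item:th:lc.a0.cpct:eg}; and \Cref{item:th:lc.a0.cpct:eg}$\Rightarrow$\Cref{item:th:lc.a0.cpct:en} is the restriction to the closed invariant subspace $E_n\bG$.

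The main obstacle, and the only place where first-countability is essential, is closing the cycle from the weakest leaf conditions back to \Cref{item:th:lc.a0.cpct:lcc}; concretely I must prove, for first-countable $\bG$, that \Cref{item:th:lc.a0.cpct:a0.same.top} (equivalently \Cref{item:th:lc.a0.cpct:gca0.l}) and \Cref{item:th:lc.a0.cpct:cn} each force local countable compactness, which then feeds the whole forward machinery and renders all conditions equivalent. I would argue the contrapositive by an explicit diagonal construction against the discrete test space $\aleph_0$. If $\bG$ is first-countable but fails to be locally countably compact at some $g_0$, fix a decreasing countable neighbourhood basis $V_1\supseteq V_2\supseteq\cdots$ at $g_0$, none of which is countably compact, and diagonalize to extract a sequence $(h_k)$ with $h_k\in V_k$ whose range has no cluster point, hence is closed discrete. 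Indexing the spines of $\cC\aleph_0$ by $k$, I would plant, in the spine indexed $k$, a ``hole'' located near $h_k$ at a carefully chosen height, arranged so that the resulting set $\wt W\subseteq\bG\times\aleph_0\times I$ is open and saturated (its apex level intact, so it descends to a quotient-open $W$), yet every product neighbourhood $V\times O$ of $(g_0,\ast)$ meets some hole, because $V$ contains infinitely many $h_k$ and the adversary's per-spine heights cannot simultaneously undercut all of them. This exhibits $W$ as quotient-open but not product-open, contradicting \Cref{item:th:lc.a0.cpct:a0.same.top}; a variant with the holes moving under the cone action yields the discontinuity contradicting \Cref{item:th:lc.a0.cpct:cn}. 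This construction is where both first-countability (to obtain the diagonalizing sequence) and the maximal flexibility of apex-neighbourhoods of $\cC\aleph_0$ (arbitrary spine-heights, since $\aleph_0$ is discrete) are used in an essential way, and I expect it to be the most delicate part of the argument.
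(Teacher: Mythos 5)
Much of your outline tracks the paper's actual proof: the shearing observation behind \Cref{item:th:lc.a0.cpct:all.l}$\Leftrightarrow$\Cref{item:th:lc.a0.cpct:all.same.top} and its siblings is the paper's \Cref{le:act.on.prod.quot}, the intertwiner $(g,c)\mapsto(g,g^{-1}c)$ is \Cref{le:diag.l}, and the $t\gtrless\tfrac12$ decomposition of $\bG*\bG$ together with the countably-compact-or-not case split for \Cref{item:th:lc.a0.cpct:same.top}$\Rightarrow$\Cref{item:th:lc.a0.cpct:a0.same.top} are exactly as in the paper. Two steps, however, have genuine gaps.

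In \Cref{item:th:lc.a0.cpct:lcc}$\Rightarrow$\Cref{item:th:lc.a0.cpct:all.same.top} you assert that $x\mapsto m_x=\min_K f_x$ is lower semicontinuous. This is unjustified and false in general: the natural proof needs a finite subcover of the (typically uncountable) cover of $K$ by tubes around its points, and countable compactness only handles countable covers. Concretely, on $K\times X=\omega_1\times(\omega_1+1)$ the function equal to $\tfrac12$ on the closed set $\{\beta\le\alpha<\omega_1\}$ and to $1$ elsewhere is jointly lower semicontinuous and positive, yet $\min_\alpha$ equals $\tfrac12$ for every $\beta<\omega_1$ and jumps to $1$ at $\beta=\omega_1$. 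For discrete $X$ the issue is vacuous, so you do get the $\cC\aleph_0$ statements, but \Cref{item:th:lc.a0.cpct:all.same.top} quantifies over all $X$ and your own route to \Cref{item:th:lc.a0.cpct:en} needs the non-discrete $X=E_{n-1}\bG$. The paper's \Cref{pr:cpctk} avoids the semicontinuity claim by assembling the apex neighborhood as a union of open boxes $\bigcup_x X_x\times[0,1/m_x)$, one per $x\in X$, where the only finite-subcover extraction is from the \emph{countable} cover of $V_z$ indexed by the countable local base at $0\in I$; your argument would have to be reorganized along those lines.

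The more serious problem is the contrapositive for \Cref{item:th:lc.a0.cpct:a0.same.top}$\Rightarrow$\Cref{item:th:lc.a0.cpct:lcc}, which is internally inconsistent. A sequence $h_k\in V_k$ with $(V_k)$ a decreasing neighborhood basis at $g_0$ necessarily converges to $g_0$, so its range clusters at $g_0$ and cannot be closed discrete (unless finite); conversely, if it genuinely had no cluster point, a small $V\ni g_0$ would contain only finitely many $h_k$ and your closing step ``$V$ contains infinitely many $h_k$'' fails. Moreover, one isolated hole per spine at a positive height $t_k$ is always ducked under by taking the spine-wise apex neighborhood $\{k\}\times[0,\epsilon_k)$ with $\epsilon_k<t_k$, so no set of that shape can witness the failure of \Cref{item:th:lc.a0.cpct:a0.same.top}. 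What is actually needed, and what the paper does, is to spread the obstruction in the $n$-th spine over an entire non-countably-compact neighborhood $W_n\ni 1$ via a countable open cover $W_n\subseteq\bigcup_m U_{nm}$ with no finite subcover, using the open set $\left(\left(\bG\setminus W_n\right)\times I\right)\cup\bigcup_m\left(U_{nm}\times\left[0,\tfrac1m\right)\right)$: any $V\supseteq W_n$ then admits no uniform positive height in that spine precisely because no finite subfamily of the $U_{nm}$ covers $W_n$. A closed discrete set enters only in the separate implication \Cref{item:th:lc.a0.cpct:cn}$\Rightarrow$\Cref{item:th:lc.a0.cpct:lcc}, and there too the obstructions are whole translated neighborhoods $g_nW_n$, not points.
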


\emph{First-countability} means, as usual \cite[Definition 10.3]{wil_top}, that points have countable neighborhood bases. Some of this appears in various guises in \cite[Proposition 2.2 and Theorem 2.8]{2511.13511v1} (and their proofs). The latter, for instance, proves (as part of a more elaborate statement) the implication \Cref{item:th:lc.a0.cpct:lcc} $\Rightarrow$ \Cref{item:th:lc.a0.cpct:all.same.top} of \Cref{th:lc.a0.cpct} for \emph{globally} countably-compact groups.

\subsection*{Acknowledgments}

Insightful input from M. Tobolski has contributed to improving various early drafts of the paper. This work is part of the project Graph Algebras partially supported by EU grant HORIZON-MSCA-SE-2021 Project 101086394. 


\section{Main result and partial exponentiability in various guises}\label{se:main}

Topological spaces default to being Hausdorff (or $T_2$, in familiar \cite[\S 13]{wil_top} separation-axiom-hierarchy terminology), with exceptions highlighted explicitly. For topological groups this in particular entails \cite[\S 33, Exercise 10]{mnk} \emph{complete regularity} (or the property of being \emph{Tychonoff}, or $T_{3\frac 12}$ \cite[Definition 14.8]{wil_top}), i.e. (Hausdorff+) continuous functions separate points and closed sets in the sense that
\begin{equation*}
  \forall\left(x\not\in \text{closed }A\subseteq X\right)
  \exists\left(X\xrightarrow[\text{continuous}]{\quad f\quad}\bR\right)
  \left(f(x)=1 \wedge f|_A\equiv 0\right).
\end{equation*}
Much as in \cite[post Theorem 1.3]{MR1326826}, for a property $\cP$ a space is \emph{locally $\cP$} if every point has a neighborhood satisfying $\cP$; this applies for instance to $\cP$ being
\begin{itemize}[wide]
\item compactness;

\item \emph{countable compactness} \cite[p.19]{ss_countertop} (every countable open cover having a finite subcover);

\item \emph{pseudocompactness} \cite[p.20]{ss_countertop} (real-valued continuous functions on the space are bounded);

\item or \emph{boundedness}\footnote{Not to be confused with other notions, non-specific to groups: post \cite[Corollary 6.9.5]{at_top-gp_2008}, for instance, a subset of $X$ is bounded if continuous real-valued functions on $X$ restrict to bounded functions thereon; the notion is certainly not equivalent to that in use here, as follows, say, from \cite[Proposition 6.9.26]{at_top-gp_2008} (which would not hold in the present context).} \cite[p.267]{MR1326826} for subsets of topological groups: admitting covers by finitely many translates of any identity neighborhood.
\end{itemize}

A few auxiliary observations will help streamline various portions of the proof of \Cref{th:lc.a0.cpct}. As a preamble to \Cref{pr:cpctk} below (appealed to in the proof of \Cref{th:lc.a0.cpct}'s \Cref{item:th:lc.a0.cpct:lcc} $\Rightarrow$ \Cref{item:th:lc.a0.cpct:all.same.top} implication), we collect some reminders and vocabulary.
\begin{itemize}[wide]
\item Recall \cite[Definition 1.14]{juh_card} that for topological subspaces $A\subseteq J$ a \emph{neighborhood base} (also \emph{local base}) of $A\subseteq J$ is a collection $\cU$ of neighborhoods $U\supseteq A$ in $J$ such that every neighborhood of $A$ contains some member of $\cU$ (i.e. $\cU$ is inclusion-\emph{dense} in the set of neighborhoods of $A\subseteq J$ in the usual order-theoretic sense \cite[Definition II.2.4]{kun_st}). 

\item Define \emph{characters}
  \begin{equation}\label{eq:chars}
    \chi(A,J)
    :=
    \min\left|\text{cardinality of a local base of $A\subseteq J$}\right|
    ,\quad
    \chi(J)
    :=
    \sup_{\text{points p}}\chi(p,J).
  \end{equation}

\item Given a condition $\cC$ on cardinal numbers, we refer to a space as \emph{compact$_{\cC}$} if every $\alpha$-member open cover has a finite subcover whenever $\alpha$ satisfies $\cC$. Taking $\cC$ to be empty recovers ordinary compactness, while compact$_{<\aleph_1}$ means countable compactness. The discussion centers mostly on compactness$<\kappa$.

\item For spaces $X$ topological embeddings $A\lhook\joinrel\xrightarrow{\iota} J$ we write
  \begin{equation*}
    \cC_{A\subseteq J} X
    =
    \cC_{\iota} X
    :=
    X\times J/X\times A,
  \end{equation*}
  equipping that space with its quotient topology unless specified otherwise. 
\end{itemize}

\begin{proposition}\label{pr:cpctk}
  Consider
  \begin{itemize}
  \item a $T_2$ compact$_{<\kappa}$ space $Z$ for an infinite cardinal $\kappa$;

  \item and a closed embedding $A\lhook\joinrel\xrightarrow{\iota} J$ with $\chi(A,J)<\kappa$. 
  \end{itemize}
  For arbitrary topological spaces $X$, the identity
  \begin{equation*}
    \left(
      \text{quotient-topologized }
      Z\times X\times J
      \xrightarrowdbl{\ \id_Z\times \pi\ }
      Z\times \cC_{\iota}X
    \right)
    =:
    Z\wtimes \cC_{\iota}X
    \xrightarrow{\ \id\ }
    \text{product }
    Z\times \cC_{\iota} X
  \end{equation*}
  is a homeomorphism.
\end{proposition}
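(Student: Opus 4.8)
The plan is to show that the continuous bijection $Z\wtimes\cC_\iota X\xrightarrow{\id}Z\times\cC_\iota X$ is open. Writing $q:=\id_Z\times\pi$ for the defining quotient map (with $\pi\colon X\times J\twoheadrightarrow\cC_\iota X$), continuity of $q$ into the product already makes the quotient topology the finer of the two, so only the reverse inclusion needs proof: every $q$-saturated open $V\subseteq Z\times X\times J$ must have product-open image $W:=q(V)$. Since $\iota$ is a \emph{closed} embedding, $X\times(J\setminus A)$ is open and $\pi$ restricts to a homeomorphism onto the open complement $\cC_\iota X\setminus\{*\}$ of the cone point $*$; hence $q$ restricts to a homeomorphism of $Z\times X\times(J\setminus A)$ onto the open set $Z\times(\cC_\iota X\setminus\{*\})$, and $W$ is automatically product-open off the cone slice $Z\times\{*\}$. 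Everything therefore reduces to exhibiting, for each $(z_0,*)\in W$, a product-basic neighborhood contained in $W$.

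Fix such a $z_0$ and put $P:=\{z\in Z:\{z\}\times X\times A\subseteq V\}$, so that $z_0\in P$. Here saturation is used decisively. First, if $z\notin P$ then the whole $q$-fiber $\{z\}\times X\times A$ over $(z,*)$ meets $V^{c}$ and hence, being a fiber, lies in $V^{c}$; thus $Z\setminus P=\bigcap_{(x,a)\in X\times A}\{z:(z,x,a)\in V^{c}\}$ is an intersection of closed slices, so $P$ is open. Second, a product-basic neighborhood of $(z_0,*)$ has the form $O\times\pi(U)$ with $O\ni z_0$ open in $Z$ and $U\supseteq X\times A$ open in $X\times J$, and $O\times\pi(U)\subseteq W$ is equivalent to $O\times U\subseteq V$; if one insists that $O\subseteq P$, then the part of $V^{c}$ lying over the ``empty'' $z\notin P$ is avoided for free, and all remaining obstruction sits at heights $j\notin A$. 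The statement is thereby reduced to the tube-type claim: given $z_0\in P$, produce an open $O\ni z_0$ and an open $U\supseteq X\times A$ with $O\times U\subseteq V$.

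The engine is a $\kappa$-ary tube lemma that consumes both hypotheses. Fix a down-directed neighborhood base $\{G_\lambda\}_{\lambda<\mu}$ of $A$ in $J$ with $\mu=\chi(A,J)<\kappa$, and recall the finite-intersection form of compactness$_{<\kappa}$: any family of fewer than $\kappa$ closed subsets of $Z$ with the finite-intersection property has nonempty intersection. The prototype I would prove this way is, for a fixed $x$ and a closed $F\subseteq P$: if $F\times\{x\}\times\{a\}\subseteq V$ with $\chi(a,J)<\kappa$, then some neighborhood $G\ni a$ satisfies $F\times\{x\}\times G\subseteq V$. Indeed, were every base element to fail, one could choose $(p_\lambda,y_\lambda)\in F\times U_\lambda$ (for a $<\kappa$ base $\{U_\lambda\}$ of the point $a$) with $(p_\lambda,x,y_\lambda)\in V^{c}$; a cluster point $p^{\ast}\in F$ of $(p_\lambda)$, extracted from the finite-intersection property applied to the closed tails $\overline{\{p_{\lambda'}:\lambda'\ge\lambda\}}$, together with $y_\lambda\to a$ and down-directedness of the base, forces $(p^{\ast},x,a)\in V^{c}$, contradicting $F\subseteq P$. (For a general closed $A$ of small character the same cluster argument is run against the base $\{G_\lambda\}$ of the \emph{set} $A$, using closedness of $A$; this is already one delicate point.)

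The main obstacle is assembling these local data into a \emph{single} tube while $X$ ranges over an arbitrary parameter space and $Z$ need not be locally compact. Openness of $V$ supplies, for each $(x,a)\in X\times A$, a genuine box $O_{x,a}\times B_{x,a}\times C_{x,a}\subseteq V$ with $O_{x,a}\ni z_0$, $B_{x,a}\ni x$, $C_{x,a}\ni a$, whence $U:=\bigcup_{x,a}B_{x,a}\times C_{x,a}$ is automatically open and contains $X\times A$ — so the $X$-direction is dealt with by unions. What remains is to amalgamate the neighborhoods $O_{x,a}$ into one open $O\ni z_0$ with $O\subseteq\bigcap_{x,a}O_{x,a}$ still a neighborhood of $z_0$. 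This is exactly where, for locally compact $Z$, one invokes a \emph{compact} neighborhood of $z_0$ and the classical (Whitehead) tube-lemma argument; that route is unavailable here, and replacing it is the crux. The plan is a global finite-subcover argument keyed to the base $\{G_\lambda\}$: organize the amalgamation by the level $\lambda<\mu$ rather than by $x$, so that — using the uniform-height upgrade of the previous paragraph on closed pieces of $Z$, and saturation, which makes the slice $\{z:(z,x,a)\in V\}$ over $a\in A$ equal to $P$ \emph{independently of $x$} — the fewer-than-$\kappa$ levels furnish a $<\kappa$-family of closed constraints on $Z$ to which compactness$_{<\kappa}$ applies, extracting a finite subfamily and hence a single $O$. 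Carrying out this bookkeeping — keeping the level-sets in $Z$ genuinely closed despite the non-compactness of $A$, and preventing the failures from ``escaping to infinity'' in the arbitrary factor $X$ — is the delicate heart of the argument and the step I expect to demand the most care.
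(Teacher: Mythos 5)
Your framing of the problem is the right one, and it coincides with the paper's: the two topologies can only disagree along the slice $Z\times\{*\}$; saturation makes $P=\{z:\{z\}\times X\times A\subseteq V\}$ equal to a single open slice of $V$ (hence open); and everything reduces to producing, for $z_0\in P$, an open $O\ni z_0$ and an open $U\supseteq X\times A$ with $O\times U\subseteq V$. Your ``prototype'' cluster-point lemma (a net indexed by a down-directed base of size $<\kappa$ clusters inside a closed, hence compact$_{<\kappa}$, subset of $Z$) is likewise exactly the ingredient that the paper's finite-subcover step encodes.

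The trouble is that the proof stops where the proposition actually lives. The amalgamation over $X\times A$ is only announced as a plan (``organize by the level $\lambda$\dots extracting a finite subfamily and hence a single $O$''), and you yourself flag it as the step you have not carried out; as described, the plan is also aimed slightly wrong, since a single global application of compactness$_{<\kappa}$ producing one $O$ \emph{and one level} $\lambda$ would yield a uniform-height tube $O\times X\times W_\lambda\subseteq V$, which cannot exist in general (if it did, the two topologies would agree for trivial reasons, with no hypothesis on $X$ needed). The missing organizational point, which is how the paper closes the argument, is the order of quantifiers. Because saturation makes the slice $\{z:(z,x,a)\in V\}$ equal to $P$ for \emph{every} $(x,a)\in X\times A$, you may fix a single compact$_{<\kappa}$ neighborhood $V_{z_0}\subseteq P$ of $z_0$ once and for all. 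Then, for each \emph{individual} $x\in X$ and a fixed neighborhood base $(W_\lambda)_{\lambda<\chi(A,J)}$ of $A$ in $J$, the fewer-than-$\kappa$ open sets $T_{x,\lambda}:=\bigcup\{O\ \text{open}:O\times B\times W_\lambda\subseteq V\ \text{for some open}\ B\ni x\}$ cover $V_{z_0}$ (this is where your cluster-point lemma is spent); a finite subcover plus directedness of the base yields one level $\lambda_x$ and one neighborhood $X_x\ni x$ with $V_{z_0}\times X_x\times W_{\lambda_x}\subseteq V$, and finally $U:=\bigcup_{x}X_x\times W_{\lambda_x}$ together with $O:=\operatorname{int}V_{z_0}$ does the job. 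The arbitrary factor $X$ never participates in any compactness argument---it is handled purely by taking unions---so the worry about failures ``escaping to infinity in $X$'' dissolves; what varies with $x$ is the level $\lambda_x$, never the $Z$-side neighborhood. Until this per-$x$ bookkeeping is written down, what you have is an accurate description of the difficulty rather than a proof of the proposition.
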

\begin{proof}
  $A\subseteq J$ being closed, the topologies will in any case agree locally at points in the open (in either topology) complement of the image
  \begin{equation}\label{eq:copy.of.z}
    Z\cong \left(\id_Z\times \pi\right)\left(X\times A\right)
    \subseteq
    Z\times \cC_{\iota}X;
  \end{equation}
  it suffices to verify agreement of local bases around points belonging to \Cref{eq:copy.of.z}. For notational convenience, we move the discussion to the original space $Z\times X\times J$ and work with open subsets (or point neighborhoods) therein \emph{saturated} \cite[Definition 9.8]{wil_top} for the equivalence relation with the preimages of $\id_Z\times \pi$ as classes. 

  Consider, then, a saturated neighborhood
  \begin{equation*}
    U
    \supseteq
    z\times X\times A
    \subseteq
    Z\times X\times J
  \end{equation*}
  (suppressing braces from $\{z\}\times-$), which in particular contains a neighborhood of some $V_z\times X\times A$ for a neighborhood $V_z\ni z\in Z$ we may as well assume compact$_{<\kappa}$. For individual $x\in X$ the slice
  \begin{equation*}
    U|_x:=U\cap \left(Z\times x\times J\right)
  \end{equation*}
  contains $V_z\times A\cong V_z\times x\times A$; having fixed a local base $\left(W_{A,\lambda}\right)_{\lambda<\kappa'<\kappa}$ around $A\subseteq J$, $U|_x$ will contain a neighborhood of $V_z\times A\cong V_z\times x\times A$ of the form
  \begin{equation*}
    \bigcup_{\lambda<\kappa'}
    \left(
      V_{z,\lambda}\times W_{A,\lambda}
      \cong
      V_{z,\lambda}\times x\times W_{A,\lambda}
    \right)
    ,\quad
    \text{open }V_{z,\lambda}\ni z.
  \end{equation*}
  Compactness$_{<\kappa}$ ensures that finitely many $V_{z,\lambda}$ cover $V_z$, hence the existence of a neighborhood $X_x\ni x\in X$ with
  \begin{equation*}
    V_z\times X_x\times W_{A,\lambda=\lambda_x}
    \subseteq
    U.
  \end{equation*}
  Ranging over $x$,
  \begin{equation*}
    U
    \supseteq
    \bigcup_x V_z\times X_x\times W_{A,\lambda_x}
    =
    V_z\times \left(\bigcup_x X_x\times W_{A,\lambda_x}\right);
  \end{equation*}
  this confirms that $\left(\id_Z\times \pi\right)(U)$ is in fact a neighborhood of $z\in \text{\Cref{eq:copy.of.z}}$ in the Cartesian product $Z\times \cC_{\iota} X$ topologized as such. 
\end{proof}

\begin{remarks}\label{res:part.ladj.cpctk}
  \begin{enumerate}[(1),wide]
  \item As recalled post \Cref{eq:act.on.colim}, locally compact spaces (not necessarily $T_2$, if sufficient care is taken in defining the notion) are exponentiable and hence the corresponding endofunctors $-\times X$ are cocontinuous. \Cref{pr:cpctk} can be regarded as an analogue: it recovers a kind of partial cocontinuity given ``sufficient local compactness''.
    
  \item The term `$\kappa$-compact' might present itself as preferable to `compact$_{<\kappa}$', but it is already in use in the literature in several ways that conflict with the present intent: the notions employed in \cite[Definition 1.8]{juh_card} or \cite[2$^{nd}$ paragraph]{MR244947}, say (themselves mutually distinct) are such that increasing $\kappa$ produces a \emph{weaker} constraint; here, compactness$_{\kappa}$ is strength-wise \emph{non}-decreasing in $\kappa$. 
  \end{enumerate}
\end{remarks}

The following simple general remark underlies the equivalences \Cref{item:th:lc.a0.cpct:all.l} $\Leftrightarrow$ \Cref{item:th:lc.a0.cpct:all.same.top}, \Cref{item:th:lc.a0.cpct:gcg.l} $\Leftrightarrow$ \Cref{item:th:lc.a0.cpct:same.top} and \Cref{item:th:lc.a0.cpct:gca0.l} $\Leftrightarrow$ \Cref{item:th:lc.a0.cpct:a0.same.top} of \Cref{th:lc.a0.cpct}.

\begin{lemma}\label{le:act.on.prod.quot}
  Let $\bG$ be a topological group, $X$ a topological space, $R\subseteq X\times X$ an equivalence relation, and write
  \begin{itemize}[wide]
  \item $\bG\times X/R$ for the Cartesian product equipped with its usual product topology;
  \item and
    \begin{equation*}
      \bG\wtimes X/R
      :=
      \text{quotient-topologized }
      \left(
        \bG\times X
        \xrightarrowdbl{\quad}
        \bG\times X/R
      \right).
    \end{equation*}
  \end{itemize}
  The identity $\bG\wtimes X/R\to \bG\times X/R$ is a homeomorphism if and only if the left-translation action
  \begin{equation*}
    \bG
    \times
    \left(\bG\wtimes X/R\right)
    \xrightarrow{\quad}
    \bG\wtimes X/R
  \end{equation*}
  is continuous. 
\end{lemma}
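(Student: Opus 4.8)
The plan is to dispatch the two implications separately, the forward one being essentially formal and the reverse one carrying all the content. As underlying-set maps, the defining quotient $q\colon \bG\times X\to \bG\wtimes X/R$ is $\id_\bG\times\pi$ for the quotient projection $\pi\colon X\to X/R$, and since $\id_\bG\times\pi$ is continuous into the product $\bG\times X/R$, the product topology is always coarser than the quotient topology; thus $\id\colon\bG\wtimes X/R\to\bG\times X/R$ is automatically continuous, and being a homeomorphism is exactly the assertion that the two topologies coincide. For the forward direction I would assume this coincidence and simply observe that, after the standard reassociation homeomorphism $\bG\times(\bG\times X/R)\cong(\bG\times\bG)\times X/R$, the left-translation action $a(h,(g,\xi))=(hg,\xi)$ becomes $m\times\id_{X/R}$ with $m$ the multiplication of $\bG$, which is continuous because $m$ is; so continuity of the action follows.

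The reverse direction is the heart of the matter: assuming $a$ is continuous, I want to show every quotient-open $W$ is product-open, which upgrades the always-continuous $\id\colon\bG\wtimes X/R\to\bG\times X/R$ to a homeomorphism. Fix a point $(g_0,\xi_0)\in W$. The decisive move is to test joint continuity of $a$ not at $(e,(g_0,\xi_0))$ but at the point $(g_0,(e,\xi_0))$, where $e\in\bG$ is the identity and indeed $a(g_0,(e,\xi_0))=(g_0,\xi_0)\in W$. Continuity there produces an open $N\ni g_0$ in $\bG$ and a quotient-open $\Omega\ni(e,\xi_0)$ with $(hg,\xi)\in W$ for all $h\in N$ and all $(g,\xi)\in\Omega$.

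I would then extract the slice $M:=\{\xi\in X/R:(e,\xi)\in\Omega\}$ of $\Omega$ over the identity. The key observation is that $M$ is open in $X/R$: the preimage $q^{-1}(\Omega)$ is open and saturated for the relation identifying $(g,x)$ with $(g,x')$ whenever $xRx'$, and since that relation never mixes distinct $\bG$-coordinates, the slice $\{x\in X:(e,x)\in q^{-1}(\Omega)\}$ is itself open and $R$-saturated, hence equals $\pi^{-1}(M)$ and $M$ is open. Now specializing the tube condition to $g=e$ gives $(h,\xi)\in W$ for all $h\in N$ and all $\xi\in M$, that is $N\times M\subseteq W$ with $N\ni g_0$ open and $M\ni\xi_0$ open in $X/R$; this is a product-open box witnessing that $W$ is product-open at $(g_0,\xi_0)$. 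As $(g_0,\xi_0)\in W$ was arbitrary, $W$ is product-open, and the topologies agree.

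The step I expect to be the genuine obstacle — and the reason the naive argument stalls — is the correct placement of the identity. Testing continuity at $(e,(g_0,\xi_0))$ only yields sets of the form $(NN')\times V'$ inside $q^{-1}(W)$ with $V'\subseteq X$ an \emph{unsaturated} open set, and such boxes do not descend to product-opens of $\bG\times X/R$; left translation acts purely on the $\bG$-coordinate and therefore cannot on its own ``saturate'' the $X$-direction. What makes the proof go through is exactly that keeping the point's $\bG$-coordinate fixed at $e$ forces the relevant slice of $\Omega$ to sit over a single group element, where the verticality of the relation renders it automatically $R$-saturated. Note also that only continuity of $a$ (not inversion or any further group structure) is used in this direction.
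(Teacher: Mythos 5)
Your argument is correct and is essentially the paper's: both directions hinge on precomposing the action with $(g,\xi)\mapsto(g,(e,\xi))$, i.e.\ on testing continuity at points whose second slot carries the identity, so that the action returns the identity map $\bG\times X/R\to\bG\wtimes X/R$. The paper packages this as a two-line composition of continuous maps (the embedding $\id_\bG\times(\xi\mapsto(e,\xi))$ followed by the action); your open-set/slice analysis is just that same composition unwound pointwise, with the openness of $M$ playing the role of the continuity of the embedding.
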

\begin{proof}
  The forward implication $(\Rightarrow)$ is immediate, and the converse is effectively what \cite[Example 1.5.11]{dvr-bk} argues in the specific case $\bG:=(\bQ,+)$: the right-hand map in 
  \begin{equation*}
    \bG\times X/R
    \lhook\joinrel\xrightarrow{\quad\id\bG\times \left(\text{obvious embedding}\right)\quad}      
    \bG\times \left(\bG\wtimes X/R\right)
    \xrightarrow{\quad}
    \bG\wtimes X/R,
  \end{equation*}
  being assumed continuous, so is the composition. 
\end{proof}

To transition between the two types of actions mentioned in \Cref{th:lc.a0.cpct}\Cref{item:th:lc.a0.cpct:gcg.diag} and \Cref{item:th:lc.a0.cpct:gcg.l} we will need

\begin{lemma}\label{le:diag.l}
  Let $\bG$ be a topological group and $X$ a $\bG$-space (no separation assumptions), and consider the left-hand-translation and diagonal actions on $\bG\wtimes \cC X$.

  If one of those actions is continuous so is the other, and the resulting $\bG$-spaces are $\bG$-homeomorphic. 
\end{lemma}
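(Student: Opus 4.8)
The plan is to exhibit a single self-homeomorphism of $\bG\wtimes\cC X$ that conjugates the left-translation action into the diagonal action; the equivalence of continuity and the $\bG$-homeomorphism will both then follow by transport of structure. Write $a_L,a_D\colon \bG\times(\bG\wtimes\cC X)\to \bG\wtimes\cC X$ for the two action maps, so that $a_L(g,(h,tx))=(gh,tx)$ and $a_D(g,(h,tx))=(gh,t(gx))$, the latter making sense because the cone action $g\cdot tx=t(gx)$ fixes the vertex $t=0$.

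First I would build the shear on the pre-quotient space. Define
\[
  \wt\Phi\colon \bG\times X\times I\to \bG\times X\times I,
  \qquad
  \wt\Phi(h,x,t)=(h,hx,t).
\]
This is a homeomorphism: it is continuous since $\bG\times X\to X$ is a continuous action and the group operations are continuous, and its inverse $(h,y,t)\mapsto(h,h^{-1}y,t)$ is continuous for the same reasons. Because $\wt\Phi$ carries each slice $\{h\}\times X\times\{0\}$ into itself (only the middle coordinate is altered), it respects the cone identification collapsing each such slice, so $\pi\circ\wt\Phi$ is constant on the fibers of the quotient map $\pi\colon \bG\times X\times I\twoheadrightarrow \bG\wtimes\cC X$; the universal property of $\pi$ will then produce a continuous $\Phi\colon \bG\wtimes\cC X\to\bG\wtimes\cC X$ with $\Phi\circ\pi=\pi\circ\wt\Phi$, concretely $\Phi(h,tx)=(h,t(hx))$. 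Running the identical argument with $\wt\Phi^{-1}$ supplies a continuous inverse, so $\Phi$ is a self-homeomorphism of $\bG\wtimes\cC X$.

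A direct computation will give the intertwining identity $\Phi\circ a_L=a_D\circ(\id_\bG\times\Phi)$, since both sides send $(g,(h,tx))$ to $(gh,t((gh)x))$; equivalently $\Phi\circ L_g=D_g\circ\Phi$ for each $g$. As $\Phi$ is a homeomorphism, so is $\id_\bG\times\Phi$ on $\bG\times(\bG\wtimes\cC X)$, whence $a_D=\Phi\circ a_L\circ(\id_\bG\times\Phi^{-1})$ is continuous whenever $a_L$ is, and symmetrically $a_L=\Phi^{-1}\circ a_D\circ(\id_\bG\times\Phi)$ is continuous whenever $a_D$ is; this settles the equivalence, and the same identity exhibits $\Phi$ as the desired $\bG$-homeomorphism between the two (now continuous) $\bG$-spaces. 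I expect the only point needing care to be the claim that $\id_\bG\times\Phi$ is a homeomorphism, given that $\bG\times-$ generally fails to preserve quotients — the very pathology animating the paper. This is unproblematic here precisely because the argument invokes only that a product of two homeomorphisms is again a homeomorphism in $\cat{Top}$, and never that $\bG\times-$ commutes with the cone quotient.
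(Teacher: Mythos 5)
Your proposal is correct and is essentially the paper's own argument: the shear $\wt\Phi(h,x,t)=(h,hx,t)$ is exactly the inverse of the self-homeomorphism $(g,x,t)\mapsto(g,g^{-1}x,t)$ used in the paper, and both proofs rest on the observations that this map respects the collapsing relation (hence descends to the quotient topology) and intertwines the two actions. You merely spell out the descent via the universal property of the quotient and the conjugation identity $a_D=\Phi\circ a_L\circ(\id_\bG\times\Phi^{-1})$, which the paper leaves implicit.
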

\begin{proof}
  Simply observe that the self-homeomorphism
    \begin{equation}\label{eq:diag2l}
      \bG\times X\times I
      \ni
      (g,x,t)
      \xmapsto{\quad}
      (g,g^{-1}x,t)
      \in
      \bG\times X\times I
    \end{equation}
    intertwines the two $\bG$ actions in question and is compatible with the relation collapsing the Cartesian product onto $\bG\wtimes\cC X$.
\end{proof}

\pf{th:lc.a0.cpct}
\begin{th:lc.a0.cpct}
  The implications involving \Cref{item:th:lc.a0.cpct:en} are to be understood as making the strongest statements possible: the stronger version (all) follows from the properties claimed to be upstream, while the weakest version (some) implies those downstream. 
  
  \begin{enumerate}[label={},wide]
    
  \item\textbf{\Cref{item:th:lc.a0.cpct:all.l} $\Leftrightarrow$ \Cref{item:th:lc.a0.cpct:all.same.top}, \Cref{item:th:lc.a0.cpct:gcg.l} $\Leftrightarrow$ \Cref{item:th:lc.a0.cpct:same.top} and \Cref{item:th:lc.a0.cpct:gca0.l} $\Leftrightarrow$ \Cref{item:th:lc.a0.cpct:a0.same.top}:} Instances of a general observation relegated to \Cref{le:act.on.prod.quot}. 
    
  \item\textbf{\Cref{item:th:lc.a0.cpct:lcc} $\Rightarrow$ \Cref{item:th:lc.a0.cpct:all.same.top}:} A consequence of the broader phenomenon recorded in \Cref{pr:cpctk}. 
   
  \item\textbf{\Cref{item:th:lc.a0.cpct:all.same.top} $\Rightarrow$ \Cref{item:th:lc.a0.cpct:en}:} Cast $E_n\bG$ ($n\ge 1$) as a space
    \begin{equation*}
      E_n\bG
      =\left\{
        \sum_{i=0}^n t_i g_i
        \ :\
        g_i\in \bG
        ,\
        t_i\in [0,1]
        ,\
        \sum_i t_i=1
      \right\}
    \end{equation*}
    of convex combinations, covered for sufficiently small $\varepsilon>0$ by the interiors of its closed subspaces 
    \begin{equation}\label{eq:eieng}
      \tensor*[_{i\uparrow\varepsilon}]{E}{_n}\bG
      :=
      \left\{\sum t_i g_i\ :\ t_i\ge \varepsilon\right\}.
    \end{equation}
    It will thus suffice to prove the continuity of the $\bG$-action on a single $\tensor*[_{i\uparrow\varepsilon}]{E}{_n}\bG$, say for $i:=0$. There is a $\bG$-equivariant identification
    \begin{equation*}
      \tensor*[_{0\uparrow\varepsilon}]{E}{_n}\bG
      \ni
      \sum_{i=0}^n t_ig_i
      \xmapsto[\quad\cong\quad]{\quad}
      \left(
        g_0
        ,\ 
        t\sum_{i=1}^n s_i g_i
      \right)
      \in
      \bG\wtimes \cC E_{n-1}\bG,
    \end{equation*}
    where the codomain is equipped with its diagonal action and
    \begin{equation*}
      t
      :=
      \frac{\varepsilon}{1-\varepsilon}\cdot \frac {1-t_0}{t_0}
      \in [0,1]
      \quad\text{and}\quad
      s_i :=
      \begin{cases}
        \frac{t_i}{1-t_0}
        & \text{if }t_0<1\\
        0
        & \text{otherwise}. 
      \end{cases}
    \end{equation*}    
    That in turn transfers to the left-hand translation action by \Cref{le:diag.l} (and induction on $n$, ensuring that the earlier $E_{n-1}$ are $\bG$-spaces), whence the conclusion by the assumed coincidence $\bG\wtimes \cC E_{n-1}\bG \cong \bG\wtimes \cC E_{n-1}\bG$.

  \item\textbf{\Cref{item:th:lc.a0.cpct:eg} $\Rightarrow$ \Cref{item:th:lc.a0.cpct:en} $\Rightarrow$ \Cref{item:th:lc.a0.cpct:e1}} are obvious.
    
  \item\textbf{\Cref{item:th:lc.a0.cpct:e1} $\Leftrightarrow$ \Cref{item:th:lc.a0.cpct:gcg.diag}:} For the forward implication $(\Rightarrow)$ restrict the assumed continuous action to the closed subspace
    \begin{equation*}
      E_1\bG_{t\ge \frac 12}
      :=
      \left\{tg_1+(1-t)g_2\in \bG*\bG\ :\ t\ge \frac 12\right\}
      \subseteq
      \bG*\bG
      =
      E_1\bG,
    \end{equation*}
    and identify that space $\bG$-equivariantly with the $\bG\wtimes \cC\bG$ via
    \begin{equation*}
      E_1\bG_{t\ge \frac 12}
      \ni
      tg_1+(1-t)g_2
      \xmapsto{\quad}
      \left(g_1,\ \frac{1-t}{t}g_2\right)
      \in
      \bG\wtimes\cC\bG.
    \end{equation*}
    Conversely, continuous actions on $E_1\bG_{t\ge \frac 12}$ and the analogously-defined $E_1\bG_{t\le \frac 12}$ will glue to one on $E_1\bG$.    
    
  \item\textbf{\Cref{item:th:lc.a0.cpct:gcg.diag} $\Leftrightarrow$ \Cref{item:th:lc.a0.cpct:gcg.l}} is a direct application of \Cref{le:diag.l}.

  \item\textbf{\Cref{item:th:lc.a0.cpct:gcg.diag} $\Rightarrow$ \Cref{item:th:lc.a0.cpct:cn}:} The second projection $\bG\wtimes\cC\bG\xrightarrowdbl{\hspace{0pt}} \cC\bG$ is equivariant if the domain is equipped with its diagonal action, and the quotient topology it induces is precisely the original quotient topology on the cone.      

  \item\textbf{\Cref{item:th:lc.a0.cpct:gcg.l} $\Rightarrow$ \Cref{item:th:lc.a0.cpct:gca0.l}:} We consider two possibilities in turn.
    
    \begin{enumerate}[(i),wide]
    \item\textbf{$\bG$ is countably compact.} In that case the product and quotient topologies on $\bG\times \cC\aleph_0$ agree (implying the desired conclusion): \emph{local} countable compactness suffices, per \Cref{pr:cpctk}. 
      
    \item\textbf{$\bG$ is not countably compact.} $\bG$ will then contain (\cite[p.19]{ss_countertop}, \cite[\S 28, Exercise 4]{mnk}, \cite[Theorem 3.10.3]{eng_top_1989}, etc.) a countably-infinite discrete closed subset identifiable with $\aleph_0$, so that the action in \Cref{item:th:lc.a0.cpct:gcg.l} restricts to that of \Cref{item:th:lc.a0.cpct:gca0.l}.

    \end{enumerate}

  \item\textbf{\Cref{item:th:lc.a0.cpct:gca0.l} $\Rightarrow$ \Cref{item:th:lc.a0.cpct:lcc} ($\bG$ first-countable):} Let
    \begin{equation}\label{eq:loc.cl.bas}
      W_1\supseteq \cdots \supseteq W_n\cdots
      \ni
      1\in \bG
    \end{equation}
    be a closed-neighborhood basis, with no $W_n$ countably compact. The latter condition ensures the existence of countable open covers
    \begin{equation}\label{eq:nf.cov}
      W_n\subseteq \bigcup_{m\ge 1} U_{nm}
      ,\quad
      U_{nm}
      =
      \overset{\circ}{U}_{nm}
      \subseteq \bG
    \end{equation}
    with no finite subcovers, hence open neighborhoods
    \begin{equation*}
      U'_n
      :=
      \left(\left(\bG\setminus W_n\right)\times I\right)
      \cup
      \bigcup_{m\ge 1}
      \left(U_{nm}\times \left[0,\frac 1m\right)\right)
      \subseteq
      \bG\times I
    \end{equation*}
    of $\bG\times \{0\}\subset \bG\times I$. The image of $\bigsqcup_n U'_n$ through
    \begin{equation*}
      \bigsqcup_n \bG\times I
      \cong \bG\times \aleph_0\times I
      \xrightarrowdbl{\quad}
      \bG\times \cC\aleph_0
    \end{equation*}
    is an open neighborhood of $\bG\times \{*\}$ ($*=$ cone-tip) in $\bG\wtimes \cC\aleph_0$. Given that \Cref{eq:loc.cl.bas} is a neighborhood basis and \Cref{eq:nf.cov} have no finite subcovers, the construction ensures that for any neighborhood $V\ni 1\in \bG$, no matter how small,
    \begin{equation*}
      \exists\left(n\in \bZ_{>0}\right)
      \forall\left(\varepsilon > 0\right)
      \bigg(
      V
      \cdot
      \left(
        V\times \left[0,\varepsilon\right)
      \right)
      \ 
      \not\subseteq
      \ 
      U'_n
      \bigg)
    \end{equation*}
    (where `$\cdot$' denotes the $\bG$-action on $\bG\times I$). This means precisely that the action
    \begin{equation*}
      \bG
      \times
      \bG\wtimes \cC\aleph_0
      \xrightarrow{\quad}
      \bG\wtimes \cC\aleph_0
    \end{equation*}
    is discontinuous at $(1,*)$. 
    
  \item\textbf{\Cref{item:th:lc.a0.cpct:cn} $\Rightarrow$ \Cref{item:th:lc.a0.cpct:lcc}  ($\bG$ first-countable):} The argument again verifies the contrapositive claim, with the first-countability assumption still in place. The argument is a modified version of the preceding section of the proof: in addition to the local basis \Cref{eq:loc.cl.bas} consider also a discrete, closed, countable set $\{g_n\}_{n\in \bZ_{>0}}$ (afforded \cite[Theorem 3.10.3]{eng_top_1989} by $\bG$'s lack of global countable compactness). The $W_n\ni 1$ can be chosen sufficiently small to ensure that 
    \begin{equation}\label{eq:sqcup.wn}
      \bigsqcup_n g_n W_n = \overline{\bigsqcup_n g_n W_n}\subseteq \bG
    \end{equation}
    (i.e. the union is disjoint and closed). Indeed,
    \begin{itemize}[wide]
    \item disjointness is easily arranged for recursively, given regularity;

    \item while a cluster point $g$ of \Cref{eq:sqcup.wn} will be a cluster point for $\{g_n\}$ (contradicting the non-existence of such):
      \begin{equation*}
        \forall\left(\text{nbhds }V,V'\ni 1\right)
        \bigg(
        V\cdot W_n^{-1}\subseteq V'
        \xRightarrow{\quad g_n W_n\cap gV\ne \emptyset\quad}
        gV'
        \ni
        g_n
        \bigg).
      \end{equation*}
    \end{itemize}
    We now proceed much as before with a few minor modifications:
    \begin{itemize}[wide]
    \item in place of \Cref{eq:nf.cov} we fix countable open covers
      \begin{equation*}
        g_n W_n\subseteq \bigcup_{m\ge 1} U_{nm}
        ,\quad
        U_{nm}
        =
        \overset{\circ}{U}_{nm}
        \subseteq \bG
      \end{equation*}
      with no finite subcovers;

    \item set

      \begin{equation*}
        U'
        :=
        \left(\left(\bG\setminus \bigcup_n g_n W_n\right)\times I\right)
        \cup
        \bigcup_{n,m\ge 1}
        \left(U_{nm}\times \left[0,\frac 1m\right)\right)
        \subseteq
        \bG\times I
      \end{equation*}
      (the preimage in $\bG\times I$ of a neighborhood of the tip $*\in \cC\bG$);
      
    \item and observe that for any neighborhood $V\ni 1\in \bG$
      \begin{equation*}
        \exists\left(n\in \bZ_{>0}\right)
        \forall\left(\varepsilon > 0\right)
        \bigg(
        V
        \cdot
        \left(
          g_n V\times \left[0,\varepsilon\right)
        \right)
        \ 
        \not\subseteq
        \ 
        U'
        \bigg),
      \end{equation*}
      so that the action on the cone cannot be continuous at $(1,*)\in \bG\times \cC\bG$.
    \end{itemize}
    Finally,
  \item\textbf{\Cref{item:th:lc.a0.cpct:lcc} $\Rightarrow$ \Cref{item:th:lc.a0.cpct:eg}:} This is more comfortably outsourced to \Cref{th:lcc.eg}.  \qedhere
  \end{enumerate}
\end{th:lc.a0.cpct}

Henceforth, $\cN(\bullet)$ denotes the neighborhood filter of a point (or more generally, subset of a topological space).

\begin{theorem}\label{th:lcc.eg}
  A locally countably-compact Hausdorff group $\bG$ acts continuously on $\left(E\bG,\tau_{\varinjlim}\right)$. 
\end{theorem}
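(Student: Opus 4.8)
The plan is to reduce the continuity of the action $\bG\times E\bG\to E\bG$ to the purely topological assertion that, after restricting the first factor to a suitable countably-compact set, the product carries the colimit topology. So I would first fix a point $(g_0,x_0)$ and, invoking local countable compactness together with the fact that $\bG$ is Tychonoff, choose a \emph{closed} countably-compact neighborhood $Z\ni g_0$ (closed subsets of countably-compact sets are again countably compact, and such neighborhoods can be shrunk around any point). Since continuity is local it suffices to show that $Z\times E\bG\to E\bG$ is continuous; and because the $\bG$-action on each truncation $E_n\bG$ is already known to be continuous — this is exactly the implication \Cref{item:th:lc.a0.cpct:lcc}~$\Rightarrow$~\Cref{item:th:lc.a0.cpct:en} of \Cref{th:lc.a0.cpct}, available under local countable compactness and proved independently of the present statement — the universal property of the colimit reduces everything to
\[
  Z\times E\bG \;=\; \varinjlim_n\left(Z\times E_n\bG\right).
\]
Here the continuous identity $\varinjlim_n(Z\times E_n\bG)\to Z\times E\bG$ is automatic, so only the reverse needs proof: every $U\subseteq Z\times E\bG$ meeting each $Z\times E_n\bG$ in an open set must be product-open.

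To see this I would produce, around each $(z_0,x_0)\in U$ with $x_0\in E_{n_0}\bG$, a product box contained in $U$. Re-localize once and for all to a closed countably-compact neighborhood $Z_0\subseteq Z$ of $z_0$, kept \emph{fixed} for the remainder, and build an increasing union $W=\bigcup_{m\ge n_0}W_m$ with each $W_m$ open in $E_m\bG$; such a union is automatically open in the colimit $E\bG$, and one maintains $Z_0\times W_m\subseteq U$ at every stage, the box $(\text{interior of }Z_0)\times W$ then finishing the argument. The base step $W_{n_0}$ is a plain box inside the open set $U\cap(Z_0\times E_{n_0}\bG)$. For the inductive step I would use the collar description of the closed embedding $E_m\bG\hookrightarrow E_{m+1}\bG=E_m\bG*\bG$: the region $t<\tfrac12$ is, as the quotient of its open saturated preimage $E_m\bG\times\bG\times[0,\tfrac12)$, a cone on $\bG$ \emph{fibred} over the base $E_m\bG$, with $E_m\bG$ realized as the locus of cone-tips. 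Thickening the tip-locus footprint $W_m$ into this collar while keeping $Z_0$ whole is precisely the situation addressed by \Cref{pr:cpctk}, applied with $Z_0$ countably compact $=$ compact$_{<\aleph_1}$ and with the cone parameter $\{0\}\subseteq[0,1]$, whose character $\chi(\{0\},[0,1])=\aleph_0<\aleph_1$ meets the hypothesis \emph{irrespective of any countability property of $\bG$}; its proof produces a neighborhood of the product form $Z_0\times W_{m+1}$ inside $U$, with $W_{m+1}\supseteq W_m$ open in $E_{m+1}\bG$ and $Z_0$ undegraded.

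The main obstacle is twofold. First, \Cref{pr:cpctk} is stated for a single cone with a point-tip, whereas the collar is a cone fibred over the passive base $E_m\bG$ (its tip-locus is a copy of $E_m\bG$, not a point); I would therefore record the evident relative strengthening of \Cref{pr:cpctk} permitting an extra passive product factor $P$, whose justification is the verbatim slice-over-the-cone-base argument of \Cref{pr:cpctk} with $P$ carried along as a spectator, the only change being that the ``copy of $Z$'' becomes a copy of $Z\times P$. Second — and this is the conceptual heart — the construction runs through the \emph{countable} tower $(E_n\bG)_n$ and must never shrink $Z_0$ across the infinitely many stages. This is guaranteed exactly because the radial cone coordinate $[0,1]$ has \emph{countable} character: at each stage the covers of $Z_0$ that have to be reduced are countable, and countable compactness $=$ compactness$_{<\aleph_1}$ supplies finite subcovers that leave $Z_0$ intact. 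In other words the proof rests on the same numerical matching between ``$\chi<\kappa$'' and ``compactness$_{<\kappa}$'' that powers \Cref{pr:cpctk}, here specialized to $\kappa=\aleph_1$ so as to align the countable tower defining $E\bG$ with the countable compactness of $Z_0$; granting the two points above, this alignment is what makes the passage from level-wise continuity to continuity on the colimit go through for an arbitrary (not necessarily first-countable) locally countably-compact group.
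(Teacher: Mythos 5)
Your argument is correct in substance but takes a genuinely different route from the paper's. The paper likewise starts from the level-wise continuity supplied by \Cref{item:th:lc.a0.cpct:lcc}~$\Rightarrow$~\Cref{item:th:lc.a0.cpct:en} of \Cref{th:lc.a0.cpct}, but then identifies the slabs $\tensor*[_{i\uparrow\varepsilon}]{E}{}\bG$ with $\bG\wtimes\cC E\bG$, applies \Cref{pr:cpctk} once to the cone on the full colimit $E\bG$, trades the diagonal action for left translation via \Cref{le:diag.l}, and concludes by noting that for left translation on a \emph{product} $\bG\times\cC E_n\bG$ the compatible neighborhood systems extend up the tower for free. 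You instead prove the partial-exponentiability statement $Z\times E\bG=\varinjlim_n\left(Z\times E_n\bG\right)$ for a countably compact neighborhood $Z$ directly, climbing the tower of joins with a fibred version of \Cref{pr:cpctk}; this dispenses with \Cref{le:diag.l} and with the colimit interchange implicit in the paper's chain \Cref{eq:gceg}, uses the group structure only through level-wise continuity, and makes explicit the $\aleph_0$-versus-$\aleph_1$ bookkeeping on which both arguments ultimately rest. Two points need care in a write-up. First, the \emph{statement} of \Cref{pr:cpctk} (agreement of two topologies) only returns boxes $V_z\times(\cdots)$ with $V_z$ a possibly shrunken neighborhood of each point of $Z_0$, which cannot be iterated through infinitely many stages; what you actually need --- and correctly attribute to the \emph{proof} --- is its tube-lemma content: once $Z_0\times(\text{tip locus})\subseteq U$ holds for a compact$_{<\kappa}$ set $Z_0$, the argument yields $Z_0\times(\text{saturated neighborhood})\subseteq U$ with $Z_0$ intact. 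That strengthening, together with the passive factor $P=E_m\bG$, should be recorded as a separate lemma rather than cited as ``\Cref{pr:cpctk}''. Second, in the base step the box inside $U\cap\left(Z_0\times E_{n_0}\bG\right)$ a priori has for its first factor only a small neighborhood of $z_0$, so $Z_0$ must be chosen \emph{after} that box, inside its first factor; this is cosmetic but the order matters.
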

\begin{proof}
  We know from the already-settled implication \Cref{item:th:lc.a0.cpct:lcc} $\Rightarrow$ \Cref{item:th:lc.a0.cpct:en} of \Cref{th:lc.a0.cpct} that the truncated actions on the individual $E_n\bG$, $n\in \bZ_{\ge 0}$ are continuous. The ambient setup consists of
  \begin{itemize}[wide]
  \item a point
    \begin{equation*}
      x\in E_{n_0}\bG
      ,\quad
      n_0\in \bZ_{\ge 0};
    \end{equation*}
  \item an open neighborhood there of in $\left(E\bG,\tau_{\varinjlim}\right)$, consisting (essentially by definition) of a sequence of open sets
    \begin{equation}\label{eq:unum}
      \cN(x)\ni U_n\subseteq E_n\bG
      \quad\text{with}\quad
      U_n\cap E_m\bG= U_m,\ \forall m\le n;
    \end{equation}

  \item and the task of proving the existence of an origin neighborhood $\cN(1)\ni V\subseteq \bG$ and neighborhoods $\cN(x)\ni V_n\subseteq E_n\bG$ satisfying the analogue of \Cref{eq:unum} such that $V\triangleright V_n\subseteq U_n$ for all $n$. 
  \end{itemize}
  The noted continuity of the restricted actions $\triangleright_n:=\triangleright|_{\bG\times E_n\bG}$ ensures that the $V_n$ exist individually; the issue is the compatibility constraint $V_n\cap E_m\bG=V_m$. We will argue by recursion: assuming $V_n$ chosen for some sufficiently large $n$, and indicating ambient spaces housing neighborhoods by subscripts in $\cN_{\bullet}$, it will suffice to argue that
  \begin{equation*}
    \begin{aligned}
      &\forall\left(V_n\in \cN_{E_n\bG}\ :\ V\triangleright V_n\subseteq U_n\right)\\
      &\exists\left(V_{n+1}\in \cN_{E_{n+1}\bG}\right)
    \end{aligned}
    \quad
    \bigg(
    V_{n+1}\cap E_n\bG=V_n
    \ \wedge\ 
    V\triangleright V_{n+1}\subseteq U_{n+1}
    \bigg).
  \end{equation*}  
  The notation \Cref{eq:eieng} applies to $E\bG$ as well as the truncations $E_n\bG$, and there is no loss in assuming we are acting diagonally on
  \begin{equation}\label{eq:gceg}
    \begin{aligned}
      \tensor*[_{i\uparrow\varepsilon}]{E}{}\bG
      \ 
      &\cong
        \ 
        \bG\wtimes \cC E\bG
        \
        \overset{\text{\Cref{pr:cpctk}}}{\cong}\bG\times \cC E\bG\\
      &\cong
        \ 
        \varinjlim_n
        \left(\bG\wtimes \cC E_n\bG\right)
        \quad
        \cong
        \quad
        \varinjlim_n
        \left(\bG\times \cC E_n\bG\right).
    \end{aligned}    
  \end{equation}
  Moreover, since the $n\to n+1$ transition maps intertwine the corresponding maps \Cref{eq:diag2l} (one instance for each $X=\cC E_n\bG$, $n\in \bZ_{\ge 0}$), we have further switched to the left-hand translation action on \Cref{eq:gceg}. In that setting, though, the desired result follows swiftly: having selected the neighborhood $V_{n_0}\in \cN(x)$ in $\bG\times \cC E_{n_0-1}\bG$, simply extend it recursively to higher $\bG\times \cC E_{n}\bG$ arbitrarily subject only to the restriction that $V_n\subseteq U_n$. 
\end{proof}

It is perhaps worth noting that the mutual equivalence of the conditions listed in \Cref{th:lc.a0.cpct} (specifically, the implication \Cref{item:th:lc.a0.cpct:a0.same.top} $\Rightarrow$ \Cref{item:th:lc.a0.cpct:lcc}) does require \emph{some} constraint: first-countability cannot be removed entirely. To see this, we first need the following observation giving a lattice-theoretic criterion for $X\wtimes \cC\aleph_0\xrightarrow{\id} X\times \cC\aleph_0$ to be a homeomorphism. We omit the fairly routine proof. 

\begin{proposition}\label{pr:when.con.k}
  Let $X$ be a Hausdorff topological space $X$ and $\kappa$ a cardinal, regarded as a discrete topological space. The product and quotient topologies on $X\times \cC\kappa$ agree precisely when, for every $x\in X$ and countable collection $\cU=\left(U_{\sigma n}\right)_{\sigma<\kappa, n\in \bZ_{>0}}$ of open sets in $X$, the condition
  \begin{equation}\label{eq:when.con.k}
    \begin{aligned}
      &\exists\left(U\in\cN(x)\right)
        \forall\left(\sigma<\kappa\right)
        \left(
        U
        \subseteq 
        \bigcup_{n}U_{\sigma n}
        \right)
        \xRightarrow{\quad}\\
      \xRightarrow{\quad}\ 
      &\exists\left(V\in \cN(x)\right)
        \forall\left(\sigma<\kappa\right)
        \exists\left(M_{\sigma}\in \bZ_{>0}\right)
        \forall\left(\sigma<\kappa\right)
        \left(
        V
        \subseteq 
        \bigcup_{n=1}^{M_{\sigma}}U_{\sigma n}
        \right)
    \end{aligned}    
  \end{equation}
  holds.  \qedhere
\end{proposition}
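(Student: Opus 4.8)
The plan is to compare the two topologies pointwise and reduce everything to the behaviour at the cone-tip cross-section. Write $\pi\colon X\times\kappa\times I\to X\times\cC\kappa$ for the quotient map defining $X\wtimes\cC\kappa$ and $*\in\cC\kappa$ for the tip. As a set map $\pi=\id_X\times q$ with $q$ the cone-collapsing map, so $\pi$ is continuous into the product topology and the quotient topology is therefore \emph{finer}; the identity $X\wtimes\cC\kappa\to X\times\cC\kappa$ is automatically continuous and the only question is whether it is open. Away from the tip $\pi$ is injective and carries each saturated product box $U\times\{\sigma\}\times(a,b)$ with $0<a<b$ to a product-open set, so the two topologies already agree at every point $(x,(\sigma,t))$ with $t>0$; I would dispatch this first. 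Consequently agreement holds \emph{iff} for each $x\in X$ every quotient-neighbourhood of $(x,*)$ is a product-neighbourhood.

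Next I would record explicit bases for the two neighbourhood filters of $(x,*)$. A basic product-neighbourhood is the image of $\bigcup_{\sigma}U\times\{\sigma\}\times[0,\varepsilon_\sigma)$ for a \emph{single} $U\in\cN(x)$ and positive reals $(\varepsilon_\sigma)_{\sigma<\kappa}$, whereas a basic quotient-neighbourhood is the image of an arbitrary saturated open $W\subseteq X\times\kappa\times I$ containing $\{x\}\times\kappa\times\{0\}$, saturation forcing the level-$0$ slice $X_0:=\{x'\colon (x',\sigma,0)\in W\}$ to be open, to contain $x$, and to be independent of $\sigma$. The clean geometric reformulation I aim for is thus: the topologies agree iff for every $x$ and every such $W$ there are $V\in\cN(x)$ and $\varepsilon_\sigma>0$ with $\bigcup_\sigma V\times\{\sigma\}\times[0,\varepsilon_\sigma)\subseteq W$. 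It then remains to see that this geometric condition is literally a restatement of \Cref{eq:when.con.k}.

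The bridge in both directions is the tube lemma applied to the compact intervals $[0,1/n]$, which is where I expect the only real work to lie. From \Cref{eq:when.con.k} to the geometric condition: given $W$, set $W_\sigma:=W\cap(X\times\{\sigma\}\times I)$ and $U_{\sigma n}:=\{x'\colon \{x'\}\times[0,1/n]\subseteq W_\sigma\}$; the tube lemma makes each $U_{\sigma n}$ open, these are increasing in $n$, and $\bigcup_n U_{\sigma n}=X_0$, so the hypothesis of \Cref{eq:when.con.k} holds with $U=X_0$, and its conclusion gives $V\subseteq U_{\sigma M_\sigma}$, i.e. $V\times[0,1/M_\sigma]\subseteq W_\sigma$, the desired box with $\varepsilon_\sigma=1/M_\sigma$. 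Conversely, given data $(U_{\sigma n})$ satisfying the antecedent with witness $U_0$, I would form the saturated open set $W:=\bigcup_{\sigma,n}(U_{\sigma n}\cap U_0)\times\{\sigma\}\times[0,1/n)$, whose level-$0$ slice equals $U_0$ for every $\sigma$ (using $U_0\subseteq\bigcup_n U_{\sigma n}$), hence is saturated; feeding $W$ into the geometric condition produces $V$ and $\varepsilon_\sigma$, and testing membership at a single level $t_0\in(1/(M_\sigma+1),1/M_\sigma)$ with $M_\sigma$ chosen so that $1/M_\sigma<\varepsilon_\sigma$ converts $V\times[0,\varepsilon_\sigma)\subseteq W_\sigma$ back into $V\subseteq\bigcup_{n\le M_\sigma}U_{\sigma n}$, the conclusion of \Cref{eq:when.con.k}.

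The main obstacle, as the statement hints by calling the argument routine, is purely quantifier bookkeeping: the product topology allows only one base set $U$ (resp.\ $V$) in the $X$-direction while permitting the cone-radii $\varepsilon_\sigma$ (resp.\ the finite bounds $M_\sigma$) to depend on $\sigma$, and this asymmetry is exactly the single $\exists U$/$\exists V$ against the $\forall\sigma\,\exists M_\sigma$ in \Cref{eq:when.con.k}. Keeping the saturation of $W$ — equivalently the $\sigma$-independence of its level-$0$ slice — under control while performing the tube-lemma translation is the one place where a little care is needed; everything else is the mechanical manipulation the statement advertises.
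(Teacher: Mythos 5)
The paper deliberately omits this proof as ``fairly routine,'' so there is no written argument to compare against; your proof is correct and is precisely the expected one (agreement away from the tip section is automatic, and at the tip the translation between saturated open neighborhoods and product boxes is mediated by the tube lemma applied to the compact intervals $[0,1/n]$). The only nitpick: in the converse direction you should first replace the witness $U_0\in\cN(x)$ by its interior before forming $W=\bigcup_{\sigma,n}\left(U_{\sigma n}\cap U_0\right)\times\{\sigma\}\times[0,1/n)$, so that $W$ is genuinely open; nothing else in the argument changes.
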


In particular, we have the following consequence. 

\begin{corollary}\label{cor:when.con.k}
  For Hausdorff $X$ and a cardinal $\kappa$ the product and quotient topologies on $X\times \cC\kappa$ agree whenever either of the following conditions holds:
  \begin{enumerate}[(a),wide]
  \item\label{item:cor:when.con.k:l.a0.cpct} $X$ is locally countably-compact.

  \item\label{item:cor:when.con.k:k.int.nbhd} For every $x\in X$ the set $\cN(x)$ of $x$-neighborhoods is closed under $\kappa$-fold intersections. 
  \end{enumerate}
\end{corollary}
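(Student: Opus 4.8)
The plan is to derive both sufficient conditions from \Cref{pr:when.con.k}, handling the two items separately since they exploit different structure: item \Cref{item:cor:when.con.k:k.int.nbhd} follows directly from the lattice criterion \Cref{eq:when.con.k}, whereas item \Cref{item:cor:when.con.k:l.a0.cpct} is cleanest to obtain by specializing \Cref{pr:cpctk}.

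For \Cref{item:cor:when.con.k:k.int.nbhd} I would verify \Cref{eq:when.con.k} directly. Fix $x\in X$ and a countable family $\cU=(U_{\sigma n})_{\sigma<\kappa,\,n\in\bZ_{>0}}$ satisfying the hypothesis of \Cref{eq:when.con.k}, so that some $U\in\cN(x)$ has $U\subseteq\bigcup_n U_{\sigma n}$ for every $\sigma<\kappa$. For each $\sigma$ the point $x$ then lies in $U_{\sigma M_\sigma}$ for some index $M_\sigma\in\bZ_{>0}$, making $U_{\sigma M_\sigma}$ an open member of $\cN(x)$. The set
\[
  V:=\bigcap_{\sigma<\kappa}U_{\sigma M_\sigma}
\]
is an intersection of a family of members of $\cN(x)$ indexed by $\kappa$, hence again lies in $\cN(x)$ by the assumed closure under $\kappa$-fold intersections; and $V\subseteq U_{\sigma M_\sigma}\subseteq\bigcup_{n=1}^{M_\sigma}U_{\sigma n}$ for every $\sigma$. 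This is exactly the conclusion of \Cref{eq:when.con.k}, so \Cref{pr:when.con.k} gives agreement of the two topologies.

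For \Cref{item:cor:when.con.k:l.a0.cpct} I would appeal to \Cref{pr:cpctk}, applied with $Z:=X$, with the cone taken on the discrete space $\kappa$ (so that $\cC_\iota(-)=\cC(-)$ arises from the closed embedding $A:=\{0\}\hookrightarrow J:=I$, with $\chi(\{0\},I)=\aleph_0$), and with compactness parameter $\aleph_1$; here compact$_{<\aleph_1}$ means exactly countably compact and $\chi(\{0\},I)=\aleph_0<\aleph_1$, so the numerical hypotheses hold. The one point needing care, and the step I expect to be the real obstacle, is that \Cref{pr:cpctk} is phrased for $Z$ that is \emph{globally} compact$_{<\aleph_1}$, whereas only local countable compactness is assumed here. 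The remedy is to observe that the proof of \Cref{pr:cpctk} invokes its global hypothesis only to produce, for each point $z$, a single countably-compact neighborhood $V_z$ (it is the saturation of the ambient neighborhood $U$ that lets one pass from the slicewise covers to such a $V_z$, without any compactness of the arbitrary factor). Local countable compactness of $X$ furnishes precisely such neighborhoods, so the argument of \Cref{pr:cpctk} goes through verbatim and yields coincidence of the product and quotient topologies on $X\times\cC\kappa$. One could instead re-extract \Cref{eq:when.con.k} from this localized form, but routing through \Cref{pr:cpctk} avoids the boundary bookkeeping that a head-on verification of the criterion would entail.
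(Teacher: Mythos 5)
Your proof is correct; item \Cref{item:cor:when.con.k:k.int.nbhd} coincides with the paper's argument, while item \Cref{item:cor:when.con.k:l.a0.cpct} takes a genuinely different route. The paper disposes of \emph{both} items by verifying the covering criterion \Cref{eq:when.con.k} of \Cref{pr:when.con.k}: for \Cref{item:cor:when.con.k:k.int.nbhd} exactly your intersection $V=\bigcap_{\sigma}U_{\sigma n_{\sigma,x}}$, and for \Cref{item:cor:when.con.k:l.a0.cpct} the one-line observation that a countably compact neighborhood $V\in\cN(x)$ (suitably combined with the witness $U$) admits, for each $\sigma$, a finite subcover of the countable cover $\left(U_{\sigma n}\right)_n$ --- so the head-on verification you sidestep as ``boundary bookkeeping'' is in fact shorter than the detour. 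Your alternative for \Cref{item:cor:when.con.k:l.a0.cpct}, specializing \Cref{pr:cpctk} with $Z:=X$, $A:=\{0\}\hookrightarrow I$, $\chi(\{0\},I)=\aleph_0$ and compactness parameter $\aleph_1$, is nonetheless sound, and the global-versus-local mismatch you flag is a real feature of the statement of \Cref{pr:cpctk}: its proof only ever invokes a single compact$_{<\kappa}$ neighborhood $V_z$ of the relevant point, so it localizes exactly as you claim --- indeed this is how the paper itself deploys \Cref{pr:cpctk} elsewhere (for \Cref{item:th:lc.a0.cpct:lcc} $\Rightarrow$ \Cref{item:th:lc.a0.cpct:all.same.top} of \Cref{th:lc.a0.cpct}, where only local countable compactness is available, and again in the step \Cref{item:th:lc.a0.cpct:gcg.l} $\Rightarrow$ \Cref{item:th:lc.a0.cpct:gca0.l}). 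What your route buys is consistency with the rest of the paper's machinery and independence from \Cref{pr:when.con.k}, whose proof the paper omits; what the paper's route buys is brevity and a single common source for both items.
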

\begin{proof}
  In case \Cref{item:cor:when.con.k:l.a0.cpct} a locally-compact neighborhood $V\in \cN(x)$ will verify \Cref{eq:when.con.k}. In case \Cref{item:cor:when.con.k:k.int.nbhd}, on the other hand, $x$ belongs to some $U_{\sigma n_{\sigma,x}}$ for arbitrary $\sigma$; simply set $V:=\bigcap_{\sigma}U_{\sigma,n_{\sigma,x}}$ to conclude.
\end{proof}

Via \Cref{cor:when.con.k}'s \Cref{item:cor:when.con.k:k.int.nbhd} branch, \Cref{ex:cnt.int.nbhd} shows that for $\kappa=\aleph_0$ \Cref{cor:when.con.k}\Cref{item:cor:when.con.k:k.int.nbhd} (incompatible with first-countability save for discrete spaces) can certainly hold for topological groups.

\begin{example}\label{ex:cnt.int.nbhd}
  The goal is to exhibit non-locally-countably-compact topological groups $\bG$ with the property that every countable intersection of identity neighborhoods is another such. $\bG$ will be totally-ordered abelian groups equipped with the \emph{order (or open-interval) topology} (automatically a group topology \cite[\S II.8, post Theorem 11]{fuchs_ord_1963}).
  
  Condition \Cref{cor:when.con.k}\Cref{item:cor:when.con.k:k.int.nbhd} above, in the context of ordered abelian $(\bG,+,<)$, translates to countable sets of strictly positive elements having strictly positive lower bounds. It suffices, at that point, to take for $\bG$ any \emph{$\eta_1$-group} in the terminology of \cite[Definition 1.37(iii)]{dw_super-real_1996}: for subsets $S_i\subseteq \bG$ of at-most-countable total cardinality we have
  \begin{equation}\label{eq:s.gap}
    \forall\left(s_i\in S_i,\ i=1,2\right)\left(s_1<s_2\right)
    \xRightarrow{\quad}
    \exists(g\in \bG)
    \forall \left(s_i\in S_i\right)
    \left(s_1<g<s_2\right).
  \end{equation}
  Per \cite[Theorem 4.29]{dw_super-real_1996}, concrete examples are provided by the \emph{ultrapowers} \cite[Definition 4.18]{dw_super-real_1996} $\bR^{\kappa}/\cU$ for cardinals $\kappa$ and \emph{$\aleph_1$-incomplete ultrafilters} \cite[Definition 6.6.3]{gld_ultra_2022} $\cU$ on $\kappa$: the incompleteness condition, meaning that $\cU$ is not closed under countable intersections, is precisely equivalent \cite[Proposition 5(ii)]{MR1340464} to $\bR\lhook\to \bR^{\kappa}/\cU$ being proper.

  Failure of local countable compactness is also immediate: the standard (diagonal) copy $\bR\subset \bR^{\kappa}/\cU$ is discrete in the inherited order topology, hence the discreteness of the infinite closed subset
  \begin{equation*}
    \left\{t\varepsilon\ :\ -1<t<1\in \bR\in \right\}
    \subseteq
    [-\varepsilon,\varepsilon]
  \end{equation*}
  for arbitrarily small $0<\varepsilon\in \bR^{\kappa}/\cU$. 
\end{example}

\begin{remarks}\label{res:int.nbhd.enough}
  \begin{enumerate}[(1),wide]
  \item\label{item:res:int.nbhd.enough:other.flds} The crux of the matter, in \Cref{ex:cnt.int.nbhd}, is that \Cref{eq:s.gap} for arbitrary $\left|S_1\cup S_2\right|<\aleph_1$ entails \Cref{eq:when.con.k}. For that reason, many variations on the example are possible: $\bQ^{\kappa}/\cU$ will do just as well, for instance (for an $\aleph_1$-incomplete ultrafilter $\cU$ on $\kappa$), or indeed $\bK^{\kappa}/\cU$ for \emph{any} subfield $\bK\le \bR$.

  \item\label{item:res:int.nbhd.enough:not.pcpct} Not only are the groups $\left(\bK^{\kappa}/\cU,+\right)$ in \Cref{item:res:int.nbhd.enough:other.flds} above not locally countably-compact, but they are in fact not even locally pseudocompact: for closed neighborhoods $[-\varepsilon,\varepsilon]\in \cN(0)$
    \begin{itemize}[wide]
    \item define an arbitrary unbounded real-valued function on the closed discrete subset $\{t\varepsilon\}_{|t|\le 1}$;
    \item and extend that function continuously to all of $[-\varepsilon,\varepsilon]$ by \emph{Tietze} \cite[Theorem 15.8]{wil_top}, using the fact \cite[\S II.8, post Proposition 12]{fuchs_ord_1963} that the interval topology on a totally-ordered abelian group is normal.
    \end{itemize}
  \end{enumerate}  
\end{remarks}

Property \Cref{item:th:lc.a0.cpct:cn} in \Cref{th:lc.a0.cpct} is also easily characterized in terms of countable covers, in an analogue of \Cref{pr:when.con.k}.

\begin{lemma}\label{le:when.act.on.cn.cont}
  A Hausdorff topological group $\bG$ acts continuously on its quotient-topologized cone $\cC \bG$ precisely when,
  \begin{equation}\label{eq:when.act.on.cn.cont}
    \forall\left(\bigcup_n \left(U_n=\overset{\circ}{U}_n\right)=\bG\right)
    \exists\left(V\in \cN(1)\right)
    \forall\left(g\in \bG\right)
    \left(gV\subseteq \bigcup^{\text{finite union}}U_n\right).
  \end{equation}
  \qedhere
\end{lemma}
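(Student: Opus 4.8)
The plan is to reduce the continuity of the $\bG$-action on $\cC\bG$ to a single local condition at the cone-tip $*$, and then to match that condition against the cover statement \eqref{eq:when.act.on.cn.cont}. First I would dispose of every point not lying over $*$. On the saturated open set $q^{-1}(\cC\bG\setminus\{*\})=\bG\times(0,1]$ (where $q\colon\bG\times I\rightarrowdbl\cC\bG$ is the quotient map) the collapsing relation is trivial, so $\cC\bG\setminus\{*\}$ carries the product topology of $\bG\times(0,1]$ and the action there is visibly $(g,(h,t))\mapsto(gh,t)$, hence continuous. Since each fixed $g_0$ acts as a self-homeomorphism of $\cC\bG$ (induced by the homeomorphism $(h,t)\mapsto(g_0h,t)$ of $\bG\times I$), the factorization $g\cdot p=g_0\cdot((g_0^{-1}g)\cdot p)$ shows that continuity at a tip point $(g_0,*)$ follows from continuity at $(1,*)$. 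Thus the action is continuous iff it is continuous at $(1,*)$. As neighborhoods of $*$ are exactly the $q$-images of open sets $O\subseteq\bG\times I$ with $\bG\times\{0\}\subseteq O$ (such $O$ being automatically saturated), continuity at $(1,*)$ unwinds to: for every open $O\supseteq\bG\times\{0\}$ there exist an open $V\in\cN(1)$ and an open $O'\supseteq\bG\times\{0\}$ with $V\cdot O'\subseteq O$.

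For the forward implication I would, given a countable open cover $\bG=\bigcup_n U_n$ (made increasing by replacing $U_n$ with $U_1\cup\dots\cup U_n$), feed the ``funnel'' neighborhood $O:=\bigcup_n U_n\times[0,1/n)$ into this criterion. Choosing $V,O'$ as above, openness of $O'$ yields for each $g$ some $\eta_g>0$ with $\{g\}\times[0,\eta_g)\subseteq O'$, whence $Vg\times[0,\eta_g)\subseteq O$; reading off a single slice $t\in(0,\eta_g)$ forces $Vg\subseteq\bigcup_{n<1/t}U_n$, a finite subunion. This is the left-translate form of \eqref{eq:when.act.on.cn.cont}; it is equivalent to the stated right-translate form by applying it to the cover $\{U_n^{-1}\}$ and inverting, so the distinction between $gV$ and $Vg$ is immaterial.

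For the converse I would run the construction backwards. Given open $O\supseteq\bG\times\{0\}$, set $U_n:=\{g\in\bG:\{g\}\times[0,1/n]\subseteq O\}$; because $[0,1/n]$ is compact, the tube lemma makes each $U_n$ open, and they form an increasing cover of $\bG$. The cover condition then supplies $V\in\cN(1)$ with each $Vg$ contained in a finite subunion, i.e.\ $Vg\subseteq U_{N_g}$ and hence $Vg\times[0,1/N_g]\subseteq O$, for every $g$. The naive witnessing neighborhood $\bigcup_g\{g\}\times[0,1/N_g)$ need not be open, so I would repair this by the standard half-neighborhood device: pick an open $V_0\ni 1$ with $V_0V_0\subseteq V$ and take $O':=\bigcup_g V_0g\times[0,1/N_g)$, which is open, contains $\bG\times\{0\}$, and satisfies $V_0\cdot O'\subseteq\bigcup_g V_0V_0g\times[0,1/N_g)\subseteq O$; this establishes continuity at $(1,*)$.

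I expect the main obstacle to be exactly this openness bookkeeping in the converse: reconciling the pointwise data $(V,N_g)$ delivered by the cover condition with the need for a genuinely open, saturated neighborhood of $\bG\times\{0\}$. The tube lemma (providing open $U_n$) together with the $V_0V_0\subseteq V$ refinement resolves it cleanly. It is reassuring that the funnel sets $\bigcup_n U_n\times[0,1/n)$ arising here are precisely the shape of the ``bad'' tip-neighborhoods already employed in the proof of \Cref{item:th:lc.a0.cpct:cn} $\Rightarrow$ \Cref{item:th:lc.a0.cpct:lcc}, which confirms that covers and tip-neighborhoods are the correct pair of objects to place in correspondence.
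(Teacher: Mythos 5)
Your proof is correct. The paper states this lemma without any proof (the tombstone sits inside the statement itself), so there is nothing to compare against; your argument --- reducing to continuity at $(1,*)$ by translation, then matching countable covers against funnel neighborhoods $\bigcup_n U_n\times[0,1/n)$ of the cone-tip, with the tube lemma and the $V_0V_0\subseteq V$ refinement supplying openness in the converse --- is exactly the routine argument the author evidently intends, being the same mechanism that drives the paper's proof of \Cref{item:th:lc.a0.cpct:cn} $\Rightarrow$ \Cref{item:th:lc.a0.cpct:lcc} in \Cref{th:lc.a0.cpct}. Your resolution of the $gV$-versus-$Vg$ discrepancy via the inverted cover $\{U_n^{-1}\}$ is also sound and worth spelling out, since the lemma as printed does state the condition with $gV$ while the left-translation action naturally produces $Vg$.
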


\Cref{eq:when.act.on.cn.cont} can be regarded as a kind of uniform local countable compactness, with the uniformity tailored to the cover. \Cref{le:when.act.on.cn.cont} shows that \Cref{ex:cnt.int.nbhd} does somewhat more than initially claimed: not only does \Cref{th:lc.a0.cpct}'s \Cref{item:th:lc.a0.cpct:a0.same.top} not (absent first-countability) imply \Cref{item:th:lc.a0.cpct:lcc}, but it does not even imply the weaker \Cref{item:th:lc.a0.cpct:cn}.

\begin{corollary}\label{cor:int.nbhd.not.act.con}
  For an $\aleph_1$-incomplete ultrafilter $\cU\subseteq 2^{\aleph_0}$ the order-topologized additive group $\bG:=\bR^{\aleph_0}/\cU$ does not act continuously on $\cC\bG$. 
\end{corollary}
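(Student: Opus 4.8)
The plan is to apply the covering criterion of \Cref{le:when.act.on.cn.cont}: it suffices to produce a \emph{single} countable open cover $\bigcup_n U_n=\bG$ defeating \Cref{eq:when.act.on.cn.cont}, i.e.\ one for which every $V\in\cN(1)$ admits a translate $g+V$ lying inside no finite subunion. Passing to the closed complements $F_n:=\bG\setminus U_n$ — which I will arrange to be decreasing (so the $U_n$ are increasing and a finite subunion is just some $U_N$) and to satisfy $\bigcap_n F_n=\emptyset$ (so the $U_n$ genuinely cover) — the goal reads: for every basic neighborhood $V=(-\delta,\delta)$ of $0$ there is $g\in\bG$ with $(g+V)\cap F_n\neq\emptyset$ for \emph{all} $n$.

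The difficulty, and the reason first-countability was needed for the upward implications in \Cref{th:lc.a0.cpct}, is the very $\eta_1$ feature underlying \Cref{ex:cnt.int.nbhd}: a countable set of positive elements has a positive lower bound, so a cover whose ``frontier'' sits at only countably many scales is rendered harmless by any $V$ finer than all of them. The construction must therefore encode \emph{uncountably} many scales into a countable cover. I would do this as follows. As an $\eta_1$-group the positive cone $\bG_{>0}$ has coinitiality $\lambda$ with $\aleph_1\le\lambda\le|\bG|=2^{\aleph_0}=|\bR|$; fix a coinitial family $(\sigma_r)_{r\in R}$ of (necessarily infinitesimal) positive elements indexed by a set $R\subseteq\bR$ of standard reals, sitting inside the closed discrete copy $\bR\subset\bG$ of \Cref{ex:cnt.int.nbhd}. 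For each $r$ attach the configuration $p_{r,j}:=r+j\sigma_r$, $j\in\bZ_{\ge 1}$, and set
\begin{equation*}
  F_n:=\bigcup_{r\in R}\{\,p_{r,j}\ :\ j\ge n\,\},\qquad U_n:=\bG\setminus F_n.
\end{equation*}
Each configuration lies in the monad $r+(\text{infinitesimals})$, is $\sigma_r$-separated, and accumulates only at a gap; since the monads attached to distinct standard reals are separated by genuine (real) distances, each $F_n$ is closed. Moreover $\bigcap_n F_n=\emptyset$, because the standard part of any point pins down the unique $r$, hence a unique index $j$, which cannot satisfy $j\ge n$ for all $n$. Thus $\{U_n\}$ is a bona fide countable open cover.

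It then remains to defeat an arbitrary $V=(-\delta,\delta)$. By the $\eta_1$ property (\Cref{eq:s.gap} applied to $\{0\}<\{\delta/j\}_{j\ge1}$) there is $\beta$ with $0<\beta<\delta/j$ for every $j$, and coinitiality of $(\sigma_r)$ supplies $r\in R$ with $\sigma_r\le\beta$; then $j\sigma_r<\delta$ for all $j$, so the \emph{entire} configuration satisfies $p_{r,j}\in r+V$. In particular $p_{r,n}\in(r+V)\cap F_n$ for every $n$, so with $g:=r$ the translate $r+V$ lies in no $U_N=\bG\setminus F_N$. Hence \Cref{eq:when.act.on.cn.cont} fails and, by \Cref{le:when.act.on.cn.cont}, the Hausdorff group $\bG=\bR^{\aleph_0}/\cU$ does not act continuously on $\cC\bG$.

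I expect the main obstacle to be exactly the passage from ``countably many scales'' to ``all scales'': the verification that the single set $F_n$ — an uncountable, monad-by-monad union of finely separated sequences — is genuinely closed, together with the bookkeeping giving $\bigcap_n F_n=\emptyset$, is the only place demanding care, since once a coinitial family of scales is indexed inside the closed discrete copy $\bR\subset\bG$ everything else is forced.
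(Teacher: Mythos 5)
Your argument is correct and rests on the same two pillars as the paper's proof: the covering criterion of \Cref{le:when.act.on.cn.cont}, and the device of attaching to continuum-many distinct standard reals $r$ a family of infinitesimal ``scales'' $\sigma_r$ coinitial in $\bG_{>0}$, so that any candidate $V\in\cN(0)$ is defeated by a bad set planted (at a fresh base point) at a scale finer than $V$'s. Where you genuinely differ is in how the offending cover is realized. The paper builds a continuous unbounded function $1/f$ on $\bG$ (unbounded on each translate $g_\sigma+W_\sigma$ of a member of a closed-neighborhood base, glued via the value $1$ at endpoints) and takes $U_n=f^{-1}((\tfrac 1n,1])$; closedness of the complements is then automatic, but the construction leans on \Cref{res:int.nbhd.enough}\Cref{item:res:int.nbhd.enough:not.pcpct}, i.e.\ on Tietze extension and normality of the order topology. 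You instead take complements of explicit closed discrete ``ladders'' $F_n=\bigcup_r\{r+j\sigma_r:\ j\ge n\}$ --- in effect the superlevel sets of the (discontinuous) function $p_{r,j}\mapsto j$ --- which is more elementary (no Tietze, no normality) but shifts the burden onto verifying that each $F_n$ is closed. That verification does go through, by the three-case analysis you gesture at: an infinite point has a radius-$1$ neighborhood missing all finite elements; a finite point is isolated from every monad other than that of its standard part by any \emph{infinitesimal}-radius neighborhood (this, rather than a ``genuine real distance,'' is the right statement, since $R$ may be dense in $\bR$ and there is then no uniform real separation between the relevant monads); and within the monad of $r$ the progression $\{j\sigma_r\}_{j\ge n}$ has no accumulation point at all, since a would-be one would produce two distinct progression members within $\sigma_r$ of each other. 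Two cosmetic points: choose the coinitial family inside the positive infinitesimals (discarding the non-infinitesimal members of an arbitrary coinitial family keeps it coinitial), and note that \Cref{re:is.cnt.parcpct} already records that the two realizations are interchangeable --- every countable open cover of $\bG$ refines to one of the form $f^{-1}(\bR_{>1/n})$ --- so your cover and the paper's are, up to refinement, the same object viewed combinatorially rather than function-theoretically.
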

\begin{proof} 
  The countable open cover $\bG=\bigcup_n U_n$ meant to negate \Cref{eq:when.act.on.cn.cont} will be of the form $U_n:=f^{-1}\left(\bR_{>\frac 1n}\right)$ for a continuous function $\bG\xrightarrow{f}\bR_{>0}$ we spend the rest of the proof constructing; or rather, it will be convenient to construct
  \begin{equation*}
    \bG
    \xrightarrow[\quad\text{continuous unbounded}\quad]{\quad 1/f\quad}
    \bR_{\ge 1}
  \end{equation*}
  instead.
  
  The character $\chi(\bG)$ (as in \Cref{eq:chars}) is easily seen to be precisely $\fc:=2^{\aleph_0}$ (and cannot, at any rate, be larger, given that $|\bG|=\fc$ to begin with). There is thus a local closed-neighborhood base
  \begin{equation*}
    (W_{\sigma})_{\sigma<\fc}
    ,\quad
    W_{\sigma}=\left[-\varepsilon_{\sigma},\varepsilon_{\sigma}\right]
    ,\quad
    \begin{aligned}
      \varepsilon_{\sigma}\in \bG_{>0}
      \text{ \emph{infinitesimal} }
      &\text{\cite[Definition 2.3(i)]{dw_super-real_1996}}:\\
      \forall\left(n\in \bZ_{\ge 0}\right)
      &\left(n\varepsilon_{\sigma}<1\right).
    \end{aligned}    
  \end{equation*}
  For a $\fc$-enumeration $\left\{g_{\sigma}\right\}_{\sigma<\fc}$ of $\bR\subset \bG$ define $1/f$ arbitrarily on the closed subset
  \begin{equation*}
    \bigcup_{\sigma<\fc}
    \left(W'_{\sigma}:=g_{\sigma}+W_{\sigma}\right)
    \subset
    \bG
  \end{equation*}
  so as to ensure that
  \begin{itemize}[wide]
  \item all restrictions $\left(1/f\right)|_{W'_{\sigma}}$ are $\bR_{\ge 1}$-valued, continuous and unbounded (always possible: \Cref{res:int.nbhd.enough}\Cref{item:res:int.nbhd.enough:not.pcpct});

  \item and evaluate to $1$ at the endpoints $g_{\sigma}\pm \varepsilon_{\sigma}$ of the closed intervals $W'_{\sigma}$. 
  \end{itemize}
  The latter condition then permits the continuous extension of $1/f$ thus defined to all of $\bG$ by simply setting $(1/f)|_{\bG\setminus \bigcup W'_{\sigma}}\equiv 1$.

  That the open cover by $U_n:=f^{-1}\left(\left(\frac 1n,1\right]\right)$ for $f$ thus built fails to satisfy \Cref{eq:when.act.on.cn.cont} is immediate from the very construction: no matter how small the candidate neighborhood $V:=W_{\sigma}\in \cN(1)$ is, $g_{\sigma'}+W_{\sigma}$ is not covered by finitely many $U_n$ for smaller $W_{\sigma'}\subset W_{\sigma}$ because ($1/f$ being unbounded on the smaller set $W'_{\sigma'}=g_{\sigma'}+W_{\sigma'}$) $f|_{g_{\sigma'}+W_{\sigma}}$ is not bounded away from $0$. 
\end{proof}

\begin{remarks}\label{res:is.cnt.parcpct}
  \begin{enumerate}[(1),wide]
  \item In a way, the choice of $U_n:=f^{-1}\left(\bR_{>1/n}\right)$ for an open cover in the proof of \Cref{cor:int.nbhd.not.act.con} was inevitable: all countable open covers are effectively of that form, in that any
    \begin{equation*}
      \bigcup_n \left(U_n=\overset{\circ}{U}_n\right)=\bG:=\bR^{\kappa}/\cU
    \end{equation*}
    has an open \emph{refinement} \cite[\S 3.1]{eng_top_1989}
    \begin{equation*}
      \bigcup_n \left(V_n=\overset{\circ}{V}_n\right)=\bG
      \quad \text{for}\quad
      \left[
        \begin{aligned}
          &f^{-1}\left(\bR_{>1/n}\right) = V_n\subseteq U_n\\
          &\bG\xrightarrow[\quad\text{continuous}\quad]{\quad f\quad}\bR_{>0}
        \end{aligned}
      \right.
    \end{equation*}
    Indeed:
    \begin{itemize}[wide]
    \item being a totally-ordered space equipped with its order topology, $\bG$ is both (even \emph{hereditarily}) normal \cite[Corollary 3.2]{MR564104} and \emph{countably paracompact} \cite[Theorem 3.6]{MR564104} (in fact, $\bG:=\bR^{\kappa}/\cU$ is even paracompact: \cite[Theorem 6.1]{MR458351});
      
    \item normality+countable paracompactness is in turn equivalent \cite[Theorem 4]{zbMATH03065554} to the existence of a sandwiched continuous function $f_{\uparrow} < f <f_{\downarrow}$ for any pair
      \begin{equation*}
        \bG
        \xrightarrow[\quad f_{\uparrow},\ f_{\downarrow}\ \text{upper/lower semicontinuous respectively}\quad]{\quad f_{\updownarrow} \quad}
        \bR;
      \end{equation*}

    \item so the claim follows by sandwiching the desired continuous function $f$ between $f_{\uparrow}\equiv 0$ and the function $f_{\downarrow}$ defined implicitly by
      \begin{equation*}
        \mathrm{graph}\left(f_{\downarrow}\right)
        :=
        \bigcup_{n\ge 1}\left(U_n\setminus \bigcup_{1\le m<n}U_m\right)\times \left\{\frac 1n\right\}
      \end{equation*}
      (the lower semicontinuity of $f_{\downarrow}$ is immediate from its definition).
    \end{itemize}

  \item In reference to the (full) paracompactness of $\bG:=\bR^{\kappa}/\cU$ noted in passing in the preceding item, it is apposite to point out that in fact all totally ordered groups equipped with their order topology are so: the \emph{left uniformity} \cite[Problem 35F]{wil_top} has what \cite{MR315664} refers to as a \emph{totally-ordered base}
    \begin{equation*}
      \left\{(x,y)\in \bG^2\ :\ x^{-1}y\in \left(g^{-1},g\right)\right\}
      \subseteq
      \bG^2
      ,\quad
      g\in \bG_{>1},
    \end{equation*}
    hence the conclusion by that paper's main result. 
  \end{enumerate}  
\end{remarks}


\addcontentsline{toc}{section}{References}

\def\polhk#1{\setbox0=\hbox{#1}{\ooalign{\hidewidth
  \lower1.5ex\hbox{`}\hidewidth\crcr\unhbox0}}}
  \def\polhk#1{\setbox0=\hbox{#1}{\ooalign{\hidewidth
  \lower1.5ex\hbox{`}\hidewidth\crcr\unhbox0}}}
  \def\polhk#1{\setbox0=\hbox{#1}{\ooalign{\hidewidth
  \lower1.5ex\hbox{`}\hidewidth\crcr\unhbox0}}}
  \def\polhk#1{\setbox0=\hbox{#1}{\ooalign{\hidewidth
  \lower1.5ex\hbox{`}\hidewidth\crcr\unhbox0}}}
  \def\polhk#1{\setbox0=\hbox{#1}{\ooalign{\hidewidth
  \lower1.5ex\hbox{`}\hidewidth\crcr\unhbox0}}}
  \def\polhk#1{\setbox0=\hbox{#1}{\ooalign{\hidewidth
  \lower1.5ex\hbox{`}\hidewidth\crcr\unhbox0}}}


\Addresses

\end{document}